\newcommand{\Null}{\operatorname{Null}}
\newtheorem{thm}{Theorem}[section]
\newtheorem{cor}[thm]{Corollary}
\newtheorem{assumption}{Assumption}
\newtheorem{lem}[thm]{Lemma}
\newtheorem{nota}[thm]{Notation}
\newtheorem{prop}[thm]{Proposition}
\newtheorem{as}[thm]{Assumption}
\theoremstyle{definition}
\newtheorem{df}[thm]{Definition}
\newtheorem{example}{Example}[section]
\newtheorem{rem}[thm]{Remark}
\numberwithin{equation}{section}
\newtheorem*{acknowledgement}{Acknowledgement}
\begin{document}

\title[Brownian Motion on a Sub-Riemannian Manifold]{Weak Convergence to Brownian Motion on Sub-Riemannian Manifolds}
\author[Gordina]{Maria Gordina{$^{\dagger}$}}
\thanks{\footnotemark {$\dagger$} This research was supported in part by NSF Grant DMS-1007496.}
\address{Department of Mathematics\\
University of Connecticut\\
Storrs, CT 06269, USA} \email{maria.gordina@uconn.edu}
\author[Laetsch]{Thomas Laetsch}
\address{Department of Mathematics\\
University of Connecticut\\
Storrs, CT 06269, USA} \email{thomas.laetsch@uconn.edu}

\keywords{
Brownian motion, sub-Riemannian manifold, hypoelliptic operator, random walk}

\subjclass{Primary 60J65, 58G32; Secondary 58J65}


\begin{abstract} This paper considers a classical question of approximation of Brownian motion by a random walk in the setting of a sub-Riemannian manifold $M$. To construct such a random walk we first address several issues related to the degeneracy of such a manifold. In particular, we define a family of sub-Laplacian operators naturally connected to the geometry of the underlining manifold. In the case when $M$ is a Riemannian (non-degenerate) manifold, we recover the Laplace-Beltrami operator. We then  construct the corresponding random walk, and  under standard  assumptions on the sub-Laplacian and $M$ we show that this random walk weakly converges to a process, horizontal Brownian motion,  whose infinitesimal generator is the sub-Laplacian. An example of the Heisenberg group equipped with a standard sub-Riemannian metric is considered in detail, in which case the sub-Laplacian we introduced is shown to be the sum of squares (H\"{o}rmander's) operator.

\end{abstract}

\maketitle
\tableofcontents

\section{Introduction}\label{s.Intro}
This paper describes a geometrically natural piecewise Hamiltonian-flow random walk in a sub-Riemannian manifold, which converges weakly to a horizontal Brownian motion on the manifold. In this setting we define a sub-Laplacian by averaging over the second-order directional derivatives in the directions of the Hamiltonian flow. In particular, in the case $\mathcal{H} = TM$, we recover the Laplace-Beltrami operator; in the case of the Heisenberg group equipped with a standard sub-Riemannian metric, we recover the sum of squares (H\"{o}rmander's) operator. While the current paper presents the probabilistic aspects of this construction, the geometric exploration of this sub-Laplacian can be found in \cite{GordinaLaetsch2014b}. As we will see in Section \ref{s4}, the sub-Laplacian we study is the one that generates the horizontal Brownian motion.

Over the last half century, Brownian motion on Riemannian manifolds has developed into a well-understood and rich theory. Much of this development relies heavily on the Riemannian structure as one can see from the monographs \cite{ElworthyBook, HsuBook2002}. There are two major ingredients which are canonical in the Riemannian case: the Riemannian volume $\mu$ and the corresponding Laplace-Beltrami operator $\Delta_{LB}$. Recall that  the Laplace-Beltrami operator is usually defined as $\operatorname{div} \operatorname{grad}$, where  $\operatorname{div}$ is defined with respect to the Riemannian volume $\mu$. From here, a Brownian motion on a Riemannian manifold can be described as a stochastic process with the infinitesimal generator  $\Delta_{LB}$.

This approach is not easily available in the sub-Riemannian case. There are several measures which might be used in lieu of the Riemannian volume such as the Hausdorff measure, Popp's measure (see \cite{MontgomeryBook2002,AgrachevBoscainGauthierRossi2009}), left or right Haar measure in the case of Lie groups. Each choice of the measure will lead to a possibly different sub-Laplacian, and therefore to a different Brownian motion.  A more detailed analysis of sub-Laplacians and natural choices of measures is presented in \cite{GordinaLaetsch2014b}.

Instead of making this choice,  we develop a more classical approach of constructing a Brownian motion as the limit of an appropriately-scaled random walk. Any complete list of references working in this direction on Riemannian manifolds would undoubtedly include the now-classic works  \cite{JorgensenE1975,Malliavin1975a}, and most relevant to our work, the {\em isotropic transport process} by M.~Pinsky \cite{Pinsky1976a}. Motivated by Pinsky's approach, the sub-Laplacian we construct is canonical {\em with respect to the limiting process of the random walk}. This sub-Laplacian $\mathcal{L}$ introduced in \eqref{eqn.L}  is elemental in the sub-Riemannian setting without some a priori canonical choice.

There are several fundamental issues in our construction  which are not apparent in the Riemannian setting. Such issues would prevent us from adopting a Pinsky-type process to a sub-Riemannian manifold without a reinterpretation of some standard
objects and their relations which are taken for granted in the Riemannian setting. One of these basic relations which has been exploited is the duality between the tangent and cotangent spaces. This duality is not available in the sub-Riemannian setting, which led us to the realization that it  seems to be more appropriate to construct the random walk in the cotangent space, rather than in the tangent space. This also manifests itself in the use of a compatible Riemannian metric in the definition of the sub-Laplacian $\mathcal{L}$, which allows us to overcome the problem of the non-uniqueness of solutions to the Hamilton-Jacobi equations with given initial position and velocity (tangent) vector. However, we show that the need for a compatible metric is illusory as $\mathcal{L}$ actually only depends on the corresponding ``vertical'' bundle.

We further mention that there is interest in seeing how recent work by  Bakry, Baudoin, Garafalo \emph{et al} \cite{BakryBaudoinBonnefont2009, BaudoinBonnefont2009, BaudoinBonnefont2012, BaudoinGarofalo2011} on generalized curvatures of such manifolds is related to dissipation of horizontal diffusions. We expect further study of connections between diffusions on sub-Riemannian manifolds and corresponding generators, as well as of behavior of hypoelliptic heat kernels and corresponding functional inequalities such as in \cite{BenArous1989b, DriverMelcher2005, BarilariBoscainNeel2012, LiHong-Quan2006}.

\section{Background and Notation}\label{s.nota}

\subsection{Sub-Riemannian basics}\label{S.SubRiemNota} We start by reviewing standard definitions of sub-Riemannian geometry that can be found e.g. in \cite{MontgomeryBook2002} and originally were introduced by R.~Strichartz in \cite{Strichartz1986a, Strichartz1986aCorrections}. Let $M$ be a $d$-dimensional connected smooth manifold, with tangent and cotangent bundles $TM$ and $T^*M$ respectively.

\begin{df}
For   $m \leqslant d$, let $\mathcal{H}$ be a smooth sub-bundle of $TM$ where each fiber $\mathcal{H}_q$ has dimension $m$ and is equipped with an inner product which smoothly varies between fibers. Then

\begin{enumerate}
\item the triple $\left( M, \mathcal{H}, \langle \cdot, \cdot \rangle \right)$ is called a \emph{sub-Riemannian manifold of rank} $m$;

\item $\mathcal{H}$ is called a \emph{horizontal distribution} on $M$, and  $\langle \cdot, \cdot \rangle$ a \emph{sub-Riemannian metric};

\item sections of $\mathcal{H}$ are called \emph{horizontal vector fields} and curves on $M$ whose velocity vectors are always horizontal are called \emph{horizontal curves}.
\end{enumerate}
\end{df}

\begin{as}[H\"{o}rmander's condition]
Throughout this paper we assume that the distribution $\mathcal{H}$ satisfies H\"{o}rmander's (bracket generating) condition; that is, horizontal vector fields with their Lie brackets span the tangent space $T_{q}M$ at every point $q \in M$.
\end{as}
Under H\"{o}rmander's condition any two points on $M$ can be connected by a horizontal curve by the Chow-Rachevski theorem. Thus there is a natural \emph{sub-Riemannian distance} (\emph{Carnot-Carath\'{e}odory distance}) on $M$ defined as the infimum over the lengths of horizontal curves connecting two points. In turn, this affords us the notion of a \emph{horizontal geodesic}, a horizontal curve whose length (locally) realizes the Carnot-Carath\'{e}odory distance.

 Due to degeneracy of the sub-Riemannian metric on the tangent bundle, it is convenient to introduce the \emph{cometric} on $T^{\ast}M$ corresponding to the sub-Riemannian structure. This is a particular section of the bundle of symmetric bilinear forms on the cotangent bundle,
 \[
 \llangle \cdot, \cdot \rrangle_{q}: T^{\ast}_{q}M \times T^{\ast}_{q}M \to \mathbb{R}, \ q \in M.
 \]
We relate the cometric to the sub-Riemanian metric via the \emph{sub-Riemannian bundle map} $\beta : T^{\ast}M \to TM$ with image $\mathcal{H}$ defined in the spirit of Riesz's theorem by
\begin{align}
\label{e.2.1}
\langle \beta_q(p), v \rangle_q = p(v)
\end{align}
for all $q\in M, p \in T^{\ast}_q M, \text{ and } v \in \mathcal{H}_q M.$ Hence the correspondence between the sub-Riemannian metric and cometric can be summarized as
\begin{align}
\label{e.2.2}
\llangle \varphi, \psi \rrangle_{q} = \langle \beta_{q}(\varphi), \beta_{q}(\psi) \rangle_{q} = \varphi\left( \beta_{q}(\psi) \right) = \psi \left( \beta_{q}(\varphi) \right),
\end{align}
for all $q \in M,$ and $\varphi, \psi \in T^{\ast}_{q}M$.

 Armed with the cometric, we conclude this section by defining the corresponding \emph{sub-Riemannian Hamiltonian} $H: T^{\ast} M \to \mathbb{R}$ by
 \[
   H\left( q, p \right) := \frac{1}{2} \llangle p, p \rrangle_{q}, \ q \in M, p \in T^{\ast}_{q} M
 \]
from which we can  recover the cometric via polarization. Again we note the following equivalent descriptions of the map
 \begin{equation}\label{e.2.3}
  H\left( q, p \right) = \frac{1}{2}\llangle p, p \rrangle_{q} = \frac{1}{2} \llangle \beta_{q}\left( p \right), \ \beta_{q}\left( p \right) \rrangle_{q} = \frac{1}{2} p \left( \beta_{q}\left( p \right) \right).
 \end{equation}
 The Hamiltonian is used to generate the dynamics of the system, where $H\left( q, p \right)$ gives the (kinetic) energy of a body located at $q$ with momentum $p$.

\subsection{Canonical coordinates and compatible metrics}

In the non-degenerate (Riemannian) case, the metric and cometric are matrix inverses of each other when written in any given local frame. Indeed, these matrices are represented componentwise by the lowered and raised indices $g_{ij}$ and $g^{ij}$ respectively. The degeneracy in the sub-Riemannian case disallows for such a relationship, leaving us with a choice of Riemannian metrics which will be \emph{compatible} with a given sub-Riemannian structure. The general non-canonical choice of compatible metrics will eventually lead us to defining a family of sub-Laplacians corresponding to the choice of compatible metric.

\begin{df}
Let $g$ be a Riemannian metric  on $M$ extending the sub-Riemannian metric; i.e.,  $g|_{\mathcal{H}_q \times \mathcal{H}_q} = \langle \cdot, \cdot \rangle_q$ for all $q \in M$. Then we say that $g$ is \emph{compatible} with the sub-Riemannian structure, or simply that $g$ is a \emph{compatible metric}.
\end{df}

Within this paper, the purpose of introducing a compatible metric $(\cdot, \cdot)$ is to take advantage of the induced bundle map $g : TM \to T^*M$ defined by $g(v) = ( \cdot, v)$, the standard duality $TM \leftrightarrow T^{\ast}M$ described generally through Riesz's theorem. This is a tool that we lose in the sub-Riemannian setting as we can associate to each cotangent (momentum) vector a corresponding horizontal (velocity) vector via $T^{\ast}M \stackrel{\beta}{\longrightarrow} \mathcal{H}$, but are unable to canonically map back $\mathcal{H} \stackrel{?}{\longrightarrow} T^{\ast}M$. With a compatible metric $g$ on hand, we then recover our return $\mathcal{H} \stackrel{g}{\longrightarrow} T^{\ast}M$. However,  as already mentioned, with the full strength of the Riemannian metric, we have a full bundle isomorphism $TM \to T^{\ast}M$, but this is more machinery than we need since we will only be considering the mapping on the horizontal distribution; something we explore presently through an observation from \cite{GordinaLaetsch2014b}.

\begin{prop} \label{prop.gVmatters}
Let $( \cdot, \cdot )$ be a Riemannian metric on $M$ and let $g : TM \to T^{\ast}M$ be the corresponding bundle map. Then $( \cdot, \cdot )$ is a compatible metric if and only if $\beta \circ g |_{\mathcal{H}} = \operatorname{Id}_{\mathcal{H}}$. Further, suppose $(\cdot, \cdot)_1$ and $(\cdot, \cdot)_2$ are compatible metrics with corresponding bundle maps $g_1, g_2 : TM \to T^{\ast}M$. For $i = 1,2$, let $\mathcal{V}_i$ be the orthogonal compliment of $\mathcal{H}$ in $TM$ with respect to $( \cdot, \cdot )_i$. Then $g_1(v) = g_2(v)$ for every $v \in \mathcal{H}$ if and only if $\mathcal{V}_1 = \mathcal{V}_2$.
\end{prop}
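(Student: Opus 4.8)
The plan is to fix a point $q \in M$ and reduce everything to linear algebra on $T_q M$ and $T^*_q M$, invoking two facts repeatedly: the sub-Riemannian inner product $\langle \cdot, \cdot \rangle_q$ is non-degenerate on $\mathcal{H}_q$, and compatibility means precisely that \emph{every} compatible metric restricts to $\langle \cdot, \cdot \rangle_q$ on $\mathcal{H}_q \times \mathcal{H}_q$. Both claims then hold on bundles by smoothness of all the data.

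For the first claim I would simply unwind the two defining relations. Given $v \in \mathcal{H}_q$, the covector $g(v) = ( \cdot, v)$ satisfies, for every $w \in \mathcal{H}_q$, the identity $\langle \beta(g(v)), w \rangle_q = g(v)(w) = (w, v)$ by the definition \eqref{e.2.1} of $\beta$ and the definition of $g$. If $(\cdot, \cdot)$ is compatible, then $(w,v) = \langle v, w \rangle_q$, so $\langle \beta(g(v)) - v, w \rangle_q = 0$ for all $w \in \mathcal{H}_q$; since $\beta(g(v)) - v \in \mathcal{H}_q$ and $\langle \cdot, \cdot \rangle_q$ is non-degenerate there, this forces $\beta(g(v)) = v$, i.e. $\beta \circ g|_{\mathcal{H}} = \operatorname{Id}_{\mathcal{H}}$. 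Conversely, reading the same chain of equalities backwards, $\beta \circ g|_{\mathcal{H}} = \operatorname{Id}_{\mathcal{H}}$ gives $\langle v, w \rangle_q = \langle \beta(g(v)), w \rangle_q = (w,v) = (v,w)$ for all $v, w \in \mathcal{H}_q$, which is exactly compatibility.

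For the second claim the key observation is that, for $v \in \mathcal{H}_q$, the covector $g_i(v)$ annihilates $\mathcal{V}_i$, because $\mathcal{V}_i$ is the $( \cdot, \cdot )_i$-orthogonal complement of $\mathcal{H}_q$ and $(w, v)_i = 0$ whenever $w \in \mathcal{V}_i$, $v \in \mathcal{H}_q$. Since $g_i$ is a bundle isomorphism, $g_i(\mathcal{H}_q)$ is an $m$-dimensional subspace of $T^*_q M$ whose common annihilator in $T_q M$ is precisely $\{ w : (w,v)_i = 0 \text{ for all } v \in \mathcal{H}_q \} = \mathcal{V}_i$. Hence if $g_1(v) = g_2(v)$ for all $v \in \mathcal{H}_q$, then $g_1(\mathcal{H}_q) = g_2(\mathcal{H}_q)$ as subspaces, so $\mathcal{V}_1 = \operatorname{Ann}(g_1(\mathcal{H}_q)) = \operatorname{Ann}(g_2(\mathcal{H}_q)) = \mathcal{V}_2$. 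Conversely, assuming $\mathcal{V}_1 = \mathcal{V}_2 =: \mathcal{V}$ gives a single splitting $T_q M = \mathcal{H}_q \oplus \mathcal{V}$ independent of $i$; writing an arbitrary $w = w_H + w_V$ in this splitting and using $v \in \mathcal{H}_q$, $w_V \in \mathcal{V} = \mathcal{H}_q^{\perp_i}$ together with compatibility on $\mathcal{H}_q \times \mathcal{H}_q$, I obtain $g_i(v)(w) = (w_H, v)_i + (w_V, v)_i = \langle w_H, v \rangle_q$, which is independent of $i$, so $g_1(v) = g_2(v)$.

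No single step is a genuine obstacle: the content is elementary pointwise linear algebra, and the two recurring inputs (non-degeneracy on $\mathcal{H}_q$ and agreement of compatible metrics on $\mathcal{H}_q \times \mathcal{H}_q$) do all the work. The only place warranting mild care is the dimension bookkeeping in the forward direction of the second claim, where one must note that $g_i|_{\mathcal{H}_q}$ is injective (a restriction of the isomorphism $g_i$), so that $g_i(\mathcal{H}_q)$ has full dimension $m$ and its annihilator is exactly the $(d-m)$-dimensional space $\mathcal{V}_i$ rather than something strictly larger.
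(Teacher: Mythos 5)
Your proof is correct, and it reaches both conclusions by a more elementary route than the paper's. The paper's argument (given only as a sketch) is structural: it decomposes $g = g_{\mathcal{H}} \oplus g_{\mathcal{V}}$ along $TM = \mathcal{H} \oplus \mathcal{V}$, observes that $g(\mathcal{V}) = \operatorname{Null}(\beta)$ so that $T^{\ast}M = g_{\mathcal{H}}(\mathcal{H}) \oplus \operatorname{Null}(\beta)$ and $\beta = \beta_{\mathcal{H}} \oplus 0$ with respect to this splitting, and then characterizes compatibility by the single operator identity $g_{\mathcal{H}} = \beta_{\mathcal{H}}^{-1}$; both assertions of the proposition fall out of the fact that $\beta_{\mathcal{H}}^{-1}$ is determined by $\mathcal{V}$ together with the sub-Riemannian structure. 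You never invert $\beta$ or invoke its null space: your first claim is a direct bilinear computation from the defining relation \eqref{e.2.1} plus non-degeneracy of $\langle \cdot, \cdot \rangle_q$ on $\mathcal{H}_q$, and your second claim is split into the annihilator identity $\mathcal{V}_i = \{ w : g_i(v)(w) = 0 \text{ for all } v \in \mathcal{H}_q \}$ (forward direction) and the splitting computation $g_i(v)(w) = \langle w_H, v \rangle_q$ (converse). What your version buys: it is self-contained, and it makes the implication $g_1|_{\mathcal{H}} = g_2|_{\mathcal{H}} \Rightarrow \mathcal{V}_1 = \mathcal{V}_2$ completely explicit — precisely the direction the paper's sketch waves at with ``it is easy enough to deduce.'' What the paper's version buys: the packaged decomposition $\beta = \beta_{\mathcal{H}} \oplus 0$ and the consequent identity $g \circ \beta \circ g = g_{\mathcal{H}} \oplus 0$ are reused later in the text (in the discussion of why the local formula \eqref{eqn.Lglocal} depends on $G$ only through $\mathcal{V}$), so the inverse-map characterization of compatibility does more work downstream than the pointwise identities do. Your dimension bookkeeping remark is fine but not strictly needed, since the annihilator of $g_i(\mathcal{H}_q)$ equals $\mathcal{V}_i$ by a direct set computation, not just by a dimension count.
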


\begin{proof}[Idea of the Proof]
For a Riemannian metric $( \cdot, \cdot )$, the corresponding bundle map $g : TM \to T^{\ast}M$ can be written as $g_{\mathcal{H}} \oplus g_{\mathcal{V}} : \mathcal{H} \oplus \mathcal{V} \to T^{\ast}M$, where $\mathcal{V}$ is the $(\cdot,\cdot)$-orthogonal compliment of $\mathcal{H}$ in $TM$. From here, noticing that $g(\mathcal{V})=\Null(\beta)$, and thus $T^*M = g_{\mathcal{H}}(\mathcal{H}) \oplus \Null(\beta)$, we have $\beta = \beta_{\mathcal{H}} \oplus 0 : g_{\mathcal{H}}(\mathcal{H}) \oplus \Null(\beta) \to TM$. Moreover, $g$ is compatible if and only if $g_{\mathcal{H}} = \beta_{\mathcal{H}}^{-1}$ which in turn happens if and only if $\beta \circ g = \operatorname{Id}_{\mathcal{H}} \oplus 0$. From here, it is easy enough to deduce that the mapping $\mathcal{H} \ni v \mapsto g(v)$ depends only on $g_{\mathcal{H}}$ and $\mathcal{V}$, but not on the behavior of $g_{\mathcal{V}}$. Since, if $g$ is compatible, then $g_{\mathcal{H}} = \beta_{\mathcal{H}}^{-1}$ is completely determined by $\mathcal{V}$ and the sub-Riemannian structure, the assertions of this proposition follow.
\end{proof}

With Proposition \ref{prop.gVmatters} understood, instead of introducing a compatible metric, we could build up the remaining work by selecting a smooth \emph{vertical} sub-bundle $\mathcal{V} \subset TM$ such that $TM = \mathcal{H} \oplus \mathcal{V}$, use this to distinguish a compliment of $\Null(\beta)$, say $H$, in $T^*M$ such that we have $\beta = \beta_{\mathcal{H}} \oplus 0  : H \oplus \Null(\beta) \to TM$ and hence recover a ``return map'' with $\mathcal{H} \stackrel{\beta_{\mathcal{H}}^{-1}}{\longrightarrow} T^*M$. As for the theory that follows, the only role that  a compatible metric $g$ serves is to distinguish the vertical bundle. However, for some calculational purposes, it seems advantageous to keep working in terms of a compatible metric $g$.

\begin{nota}
\label{nota.BG}
Let $g$ be a compatible metric. For local coordinates ${\bf x} =(x^1, ..., x^d)$ on $M$, we define the local maps $\beta^{ij}: M \to \mathbb{R}$ and $g_{ij} : M \to \mathbb{R}$ by
\[
\beta^{ij}\left( q \right) := \llangle d x^i, d x^j \rrangle_{q} \text{ and } g_{ij}(q) = \Big\langle \frac{\partial}{\partial x^i}, \frac{\partial}{\partial x^j} \Big\rangle_q
\]
for all $q$ in the domain of ${\bf x}$.
The $d \times d$ matrices with entries $\beta^{ij}$ and $g_{ij}$ will be denoted by $B$ and $G$ respectively.
\end{nota}

As $B$ is the matrix representation of the bundle map $\beta : T^{\ast}M \to TM$ in local coordinates, $G$ is the local coordinate matrix representation of the bundle map $TM \to T^{\ast}M$ defined by $v \mapsto g(\cdot,v)$.

\begin{example}[Contact manifolds]\label{ex.2.1}
 Let $M$ be a $2n+1$-dimensional manifold and $\omega$ a contact $1$-form on $M$, that is, a $1$-form such that $d  \omega$ is non-degenerate on $\operatorname{Ker}(\omega)$.  Let $\mathcal{H}:=\operatorname{Ker}(\omega)$, which defines  a $n$-dimensional horizontal distribution on $M$, called a \emph{contact distribution}, and we assume that $\mathcal{H}$ is equipped with inner product $\langle \cdot, \cdot \rangle$. The sub-Riemannian manifold $\left( M, \mathcal{H}, \langle \cdot, \cdot \rangle \right)$  is called a \emph{contact sub-Riemannian manifold}. With any contact form $\omega$ we can associate its Reeb vector field, which is the unique vector field $X_{0}$ satisfying the conditions $\omega\left( X_{0} \right) = 1$ and $d \omega(X_{0}, \cdot) = 0$. Hence for any local orthonormal frame $X_{1}, ..., X_{2n}$ for the distribution $\mathcal{H}$ we have that $X_{0}, X_{1}, ..., X_{2n}$ is a local frame, since $X_{0}$ is transversal to $\mathcal{H}$. Finally, if $\langle \cdot, \cdot \rangle $ is an inner product on $\mathcal{H}$, we can extend it to $X_{0}$ by $g\left( X_{0}, X_{0} \right)=1$ and setting $\mathcal{H} \perp X_{0}$. This $g$ is then naturally compatible with the sub-Riemannian structure. Moreover, for contact sub-Riemannian manifolds there are no abnormal geodesics, that is, all geodesics are smooth and are projections of the trajectories of the Hamiltonian vector field in $T^{\ast}M$ given by the Legendre transform of the inner product on $\mathcal{H}$.  The Heisenberg group is an example of a contact manifold where $\omega$ is a standard symplectic form.
\end{example}

\subsection{Hamilton-Jacobi Equations}

We can now re-write the Hamiltonian $H : T^{\ast}M \to \mathbb{R}$ defined by \eqref{e.2.3} using canonical coordinates. By  identifying the vector $(q^1, ..., q^d, p_1, ..., p_d)$ in $\mathbb{R}^{d \times d}$ with the point $(q, p) \in T^{\ast}M$ using local coordinates for the standard identification $q^i = x^i(q)$ and $p = \sum\limits_{i=1}^{d} p_i  d  x^i$, then
\begin{equation}
\label{eqn.HamLocal}
H(q,p) = \frac{1}{2} \sum_{i,j = 1}^{d} p_i p_j \beta^{ij}(q).
\end{equation}
A curve $(q(t), p(t)) \in T^{\ast}M$ satisfies the \emph{Hamilton-Jacobi equations} when
\begin{align}
&\dot{q}^i(t) = \frac{\partial H}{\partial p_i}\left( q(t), p(t)\right) = \frac{1}{2} \sum_{j =1}^{d} p_j(t) \beta^{ij}(q(t))
\label{eqn.HJinCC1}
\\
&\dot{p}_i(t) = -  \frac{\partial H}{\partial q^i}\left(q(t), p(t) \right) = \sum_{k, j=1}^{d} p_k(t) p_j(t) \frac{\partial \beta^{kj}}{ \partial q^i} \left|_{q(t)}\right.
\label{eqn.HJinCC2}
\end{align}
where we have slightly abused notation in the common way, conflating $\frac{\partial}{\partial p_i}$ with the partial derivative of \eqref{eqn.HamLocal} in terms of $p_i$, and  $\frac{\partial}{\partial q^i}$ with $\frac{\partial}{\partial x^i}$ in \eqref{eqn.HJinCC2}. Equations \eqref{eqn.HJinCC1} and \eqref{eqn.HJinCC2} are collectively known as the Hamilton-Jacobi equations.

Taking a time derivative in \eqref{eqn.HJinCC1}  we get
\begin{equation}
\label{eqn.qddot1}
{\ddot{q}}^k(t) = \sum_{i, j, l = 1}^{d} \left\{ \beta^{il}(q(t)) \frac{\partial \beta^{kj}}{\partial q^l}\left|_{q(t)}\right. - \frac{1}{2}  \beta^{kl}(q(t))  \frac{\partial \beta^{ij}}{\partial q^l} \left|_{q(t)}\right.\right\}p_i(t) p_j(t).
\end{equation}
Define the \emph{raised Christoffel symbols} locally by
\begin{equation}
\label{eqn.raisedChris}
\Gamma^{ijk}(q) := - \frac{1}{2} \sum_{l = 1}^{d} \left\{ \beta^{il}(q) \frac{\partial \beta^{jk}}{\partial x^l}\Big|_{q} + \beta^{jl}(q) \frac{\partial \beta^{ik}}{\partial x^l} \Big|_q-   \beta^{lk}(q) \frac{\partial \beta^{ij}}{\partial x^l} \Big|_{q}\right\}.
\end{equation}
Rewriting \eqref{eqn.qddot1} with \eqref{eqn.raisedChris} while suppressing the time dependence,

\begin{equation}
\label{eqn.qddot2}
{\ddot{q}}^k = - \sum_{i, j = 1}^{d} \Gamma^{i j k}(q) p_i p_j.
\end{equation}
The negative signs in \eqref{eqn.raisedChris} and \eqref{eqn.qddot2} are just by convention so that the acceleration term is consistent with standard Riemannian definitions.

\begin{nota}\label{n.3.1}
We let $\Phi$ be the flow of the Hamilton-Jacobi equations \eqref{eqn.HJinCC1} and \eqref{eqn.HJinCC2}. That is, $\Phi$ is a map
\[
\Phi : [0, \tau) \times T^{\ast}M  \longrightarrow T^{\ast}M,\]
 such that if $(x,p) \in T^{\ast}_xM$ then $t \mapsto \Phi_t(x,p)$ is the curve $(q(t), p(t))$ in $T^{\ast}M$ satisfying the Hamilton-Jacobi equations with initial conditions $q(0) = x$ and $p(0) = p$ for $t$ in some maximal interval $[0,\tau)$.
\end{nota}

\begin{rem}
If $(q(t),p(t)) = \Phi_t(x,p)$, then $q(t)$ is a horizontal curve. Indeed, \eqref{eqn.HJinCC1} gaurantees that $\dot{q}(t) = \beta(p(t)) \in \mathcal{H}_{q(t)}$.
\end{rem}

\section{Horizontal sub-Laplacians and the Heisenberg group}
\label{section.subLaplace}
In this section we introduce a family of second order differential operators on $M$ indexed by Riemannian metrics compatible with the sub-Riemannian structure $\left( M, \mathcal{H}, \langle \cdot, \cdot \rangle \right)$. In the Riemannian case when $\mathcal{H} = TM$, we recover the Laplace-Beltrami up to a constant scaling factor; in the Heisenberg case using the standard compatible metric introduced in Example \ref{ex.2.1}, we get the familiar sums of squares Laplacian up to a constant scaling factor.

\subsection{Horizontal sub-Laplacians}
\label{ssection.subLaplace}

Definition \ref{defi.Lg} below introduces horizontal sub-Laplacian operators, but before we can give the definition, some notation is in order.

\begin{nota}
We denote the unit sphere in $\mathcal{H}_x$ by $\mathcal{S}^{\mathcal{H}}_x := \{ v \in \mathcal{H}_x : \langle v, v \rangle_x = 1\}$. The (unique) rotationally invariant measure on $\mathcal{S}_x$ will be denoted $\mathbb{U}_x$.
\end{nota}

\begin{df} \label{defi.Lg}
Let $(\cdot, \cdot)$ be a compatible metric, and let $g$ be the corresponding bundle map $TM \to T^{\ast}M$. We define $\mathcal{L} : C^{\infty}_c(M) \to \mathbb{R}$ as
\begin{equation}\label{eqn.L}
\mathcal{L} f (x) := \int_{\mathcal{S}^{\mathcal{H}}_x} \left\{ \frac{d}{dt} \Big|_0 \frac{d}{ds} \Big|_0 f \big(\Phi_{t+s}(x,g(v))\big) \right\} \mathbb{U}_x(dv).
\end{equation}
We will call $\mathcal{L}$ the \emph{horizontal sub-Laplacian} corresponding to $g$.
\end{df}

As is now obvious from Proposition \ref{prop.gVmatters} and the remarks that followed, we have the following statement.

\begin{prop} \label{prop.LonlyonV}
Suppose $(\cdot, \cdot)_1$ and $(\cdot, \cdot)_2$ are compatible metrics giving rise to orthogonal compliments $\mathcal{V}_1$ and $\mathcal{V}_2$ of  $\mathcal{H}$, respectively. For $i = 1,2$, if $\mathcal{L}_i$ is defined by \eqref{eqn.L} with respect to $(\cdot, \cdot)_i$, then $\mathcal{L}_1 = \mathcal{L}_2$ whenever $\mathcal{V}_1 = \mathcal{V}_2$.
\end{prop}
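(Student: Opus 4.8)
The plan is to reduce the claimed equality of the two operators $\mathcal{L}_1$ and $\mathcal{L}_2$ to the already-established fact, from Proposition \ref{prop.gVmatters}, that the bundle maps agree on the horizontal distribution. The key observation is that the defining formula \eqref{eqn.L} depends on the compatible metric $(\cdot,\cdot)_i$ only through two ingredients: the bundle map $g_i$ appearing inside $\Phi_{t+s}(x, g_i(v))$, and the measure $\mathbb{U}_x$ of integration over the sphere $\mathcal{S}^{\mathcal{H}}_x$. Crucially, the unit sphere $\mathcal{S}^{\mathcal{H}}_x$ and its rotationally invariant measure $\mathbb{U}_x$ are defined purely in terms of the sub-Riemannian inner product $\langle \cdot, \cdot \rangle_x$ on $\mathcal{H}_x$, and not in terms of any compatible extension; since both $(\cdot,\cdot)_1$ and $(\cdot,\cdot)_2$ restrict to the same $\langle \cdot, \cdot \rangle_x$ on $\mathcal{H}_x$, the domain and measure in the integral \eqref{eqn.L} are identical for $i=1,2$. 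This isolates the only possible source of discrepancy as the bundle map.

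First I would invoke Proposition \ref{prop.gVmatters}: since $\mathcal{V}_1 = \mathcal{V}_2$ by hypothesis, that proposition yields $g_1(v) = g_2(v)$ for every $v \in \mathcal{H}$. In particular, for each $x \in M$ and each $v \in \mathcal{S}^{\mathcal{H}}_x \subset \mathcal{H}_x$ we have the equality of cotangent vectors $g_1(v) = g_2(v) \in T^*_x M$. Consequently the Hamilton-Jacobi initial data $(x, g_1(v))$ and $(x, g_2(v))$ coincide, and since the flow $\Phi$ from Notation \ref{n.3.1} is determined entirely by the Hamiltonian \eqref{eqn.HamLocal} — which is built from the cometric $\beta^{ij}$ and carries no dependence whatsoever on the chosen compatible metric — we obtain $\Phi_{t+s}(x, g_1(v)) = \Phi_{t+s}(x, g_2(v))$ for all $t,s$ in the relevant interval. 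The integrands in \eqref{eqn.L} therefore agree pointwise in $v$, and integrating the identical integrand against the identical measure $\mathbb{U}_x$ over the identical sphere gives $\mathcal{L}_1 f(x) = \mathcal{L}_2 f(x)$ for every $f \in C^\infty_c(M)$ and every $x$, which is the assertion.

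I do not anticipate a serious obstacle here, as the statement is essentially a corollary of Proposition \ref{prop.gVmatters} combined with the metric-independence of the Hamiltonian flow; indeed the text preceding the statement already flags it as ``now obvious.'' The one point that deserves explicit care is the claim that $\mathbb{U}_x$ is metric-independent: one must confirm that the two compatible metrics induce the same Euclidean structure on $\mathcal{H}_x$ (they do, since both restrict to $\langle\cdot,\cdot\rangle_x$), so that the rotationally invariant probability measures they determine on $\mathcal{S}^{\mathcal{H}}_x$ coincide. Once this is noted, the argument is a clean substitution. A brief remark could also record that the converse direction need not hold in the stated form, since the proposition only asserts one implication, so no symmetry argument is required.
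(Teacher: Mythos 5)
Your proof is correct and takes essentially the same route as the paper, which gives no separate argument but presents the proposition as ``now obvious from Proposition \ref{prop.gVmatters} and the remarks that followed.'' Your write-up simply makes that reduction explicit: the sphere $\mathcal{S}^{\mathcal{H}}_x$, the measure $\mathbb{U}_x$, and the flow $\Phi$ depend only on the sub-Riemannian data, while Proposition \ref{prop.gVmatters} gives $g_1(v) = g_2(v)$ for all $v \in \mathcal{H}$, so the integrands in \eqref{eqn.L} coincide pointwise.
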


\subsection{A formula for $\mathcal{L}$ in local coordinates}
Working in local coordinates, we set $q(t) := \pi\left( \Phi_t(x, p) \right)$, where $\pi$ is the projection onto $M$. Defining $v = \beta(p)$, we get

\begin{equation}
\label{eqn.2ndDeriv}
\begin{aligned}
&\frac{d}{dt} \Big|_0 \frac{d}{ds}\Big|_0 f \left(q(t+s)\right) = \frac{d}{dt}
\Big|_0  \left\{ \sum_{i=1}^{d} \dot{q}^i(t) \frac{\partial f}{\partial x^i} \Big|_{q(t)} \right\}
\\
&= \sum_{i=1}^{d}\left\{ {\ddot{q}}^i(0) \frac{\partial f}{\partial x^i}\Big|_{x} + \sum_{j=1}^{d} \dot{q}^i(0) \dot{q}^j(0) \frac{\partial^2 f}{\partial x^i  \partial x^j}\Big|_x\right\}
\\
&= \sum_{i=1}^{d} \left\{ -\sum_{k,l=1}^{d} \Gamma^{k l i}(x)\, p_k p_l  \frac{\partial f}{\partial x^i} \Big|_x + \sum_{j=1}^{d} v^i v^j \frac{\partial^2 f}{\partial x^i  \partial x^j} \Big|_x \right\}
\\
&=  \sum_{i, j =1}^{d} \left\{ v^i v^j \, \frac{\partial^2 f}{\partial x^i  \partial x^j} \Big|_x - \sum_{k=1}^{d} \Gamma^{i j k}(x) p_i \, p_j  \frac{\partial f}{\partial x^k} \Big|_x \right\}
\end{aligned}
\end{equation}

\begin{prop} \label{prop.unifint}
Let $(\cdot, \cdot)$ be a compatible metric with corresponding bundle map $g : TM \to T^{\ast}M$. For $1 \leqslant i, j \leqslant d$,
\begin{align}
\int_{\mathcal{S}^{\mathcal{H}}_x} v^i v^j  \mathbb{U}_x(dv) = \frac{1}{m}  \beta^{ij}(x)
\label{eqn.unifint1}
\end{align}
and
\begin{align}
\int_{\mathcal{S}^{\mathcal{H}}_x} p_i  p_j  \mathbb{U}_x(dv) = \frac{1}{m} \sum_{a, b=1}^{d} g_{i a}  \beta^{ab} g_{bj}(x).
 \label{eqn.unifint2}
\end{align}
Here $p = g(v)$.
\end{prop}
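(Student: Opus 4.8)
The plan is to reduce both identities to a single well-known second-moment computation on the standard Euclidean sphere, after transporting everything to a fixed $\langle\cdot,\cdot\rangle_x$-orthonormal frame of $\mathcal{H}_x$. Fix $x$ and choose an orthonormal basis $e_1,\dots,e_m$ of $(\mathcal{H}_x,\langle\cdot,\cdot\rangle_x)$, writing $e_\alpha=\sum_i e_\alpha^i\,\partial/\partial x^i$ in local coordinates. Every $v\in\mathcal{S}^{\mathcal{H}}_x$ then has the form $v=\sum_\alpha w^\alpha e_\alpha$ with $\sum_\alpha (w^\alpha)^2=1$, so $w=(w^1,\dots,w^m)$ ranges over the standard sphere $S^{m-1}\subset\mathbb{R}^m$; since $\mathbb{U}_x$ is the unique rotation-invariant probability measure on the sphere of the inner-product space $(\mathcal{H}_x,\langle\cdot,\cdot\rangle_x)$, it pushes forward under this linear isometry to the standard uniform probability measure $\sigma$ on $S^{m-1}$. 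In these coordinates $v^i=\sum_\alpha w^\alpha e_\alpha^i$, so the whole problem becomes linear in the moments $\int w^\alpha w^\beta\,d\sigma$.

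The first key step is the standard moment identity $\int_{S^{m-1}} w^\alpha w^\beta\,d\sigma = \tfrac{1}{m}\delta^{\alpha\beta}$, which I would justify purely by symmetry: invariance of $\sigma$ under the reflection $w^\alpha\mapsto -w^\alpha$ forces the off-diagonal integrals to vanish, invariance under coordinate permutations forces all diagonal integrals $\int (w^\alpha)^2\,d\sigma$ to share a common value $c$, and summing over $\alpha$ using $\sum_\alpha (w^\alpha)^2=1$ gives $mc=1$, i.e. $c=1/m$. Feeding this into $v^i=\sum_\alpha w^\alpha e_\alpha^i$ yields $\int_{\mathcal{S}^{\mathcal{H}}_x} v^i v^j\,\mathbb{U}_x(dv) = \tfrac{1}{m}\sum_\alpha e_\alpha^i e_\alpha^j$, so \eqref{eqn.unifint1} reduces to the purely geometric claim $\sum_\alpha e_\alpha^i e_\alpha^j = \beta^{ij}(x)$.

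That geometric identification is the heart of the matter and the only place where the sub-Riemannian structure genuinely enters; I would prove it directly from the defining relation \eqref{e.2.1} for $\beta$. Since $\beta(dx^j)\in\mathcal{H}_x$ and $\langle\beta(dx^j),e_\alpha\rangle_x = dx^j(e_\alpha)=e_\alpha^j$, expanding $\beta(dx^j)$ in the orthonormal frame gives $\beta(dx^j)=\sum_\alpha e_\alpha^j e_\alpha$, whence $\beta^{ij}=dx^i(\beta(dx^j))=\sum_\alpha e_\alpha^i e_\alpha^j$, which establishes \eqref{eqn.unifint1}.

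Finally, \eqref{eqn.unifint2} follows from \eqref{eqn.unifint1} by linearity together with the coordinate description of $p=g(v)$. Since $g(v)=(\,\cdot\,,v)$, its $i$-th component is $p_i=g(v)(\partial/\partial x^i)=\sum_a g_{ia}v^a$, so $\int p_i p_j\,\mathbb{U}_x(dv)=\sum_{a,b} g_{ia}g_{jb}\int v^a v^b\,\mathbb{U}_x(dv)=\tfrac{1}{m}\sum_{a,b}g_{ia}\beta^{ab}g_{bj}(x)$, using the symmetry $g_{jb}=g_{bj}$. The one nonroutine ingredient is the identity $\sum_\alpha e_\alpha^i e_\alpha^j=\beta^{ij}$; the remaining point to verify carefully is the bookkeeping that $\mathbb{U}_x$ really transports to the standard uniform measure on $S^{m-1}$, which rests on the uniqueness of the rotation-invariant probability measure on the sphere of an inner-product space.
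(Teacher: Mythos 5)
Your proposal is correct and follows essentially the same route as the paper: both arguments reduce \eqref{eqn.unifint1} to the second-moment identity for the rotationally invariant measure on the unit sphere of the inner product space $\left(\mathcal{H}_x, \langle \cdot, \cdot \rangle_x\right)$ and then convert to $\beta^{ij}$ via the defining relation \eqref{e.2.1}, handling \eqref{eqn.unifint2} by the substitution $p_i = \sum_a g_{ia}v^a$ exactly as the paper does. The only difference is packaging: the paper applies its abstract Corollary \ref{cor.equalint}, namely $\int_S (X,\xi)(Y,\xi)\,\mu(d\xi) = (X,Y)/n$, directly with $X=\beta_x(dx^i)$ and $Y=\beta_x(dx^j)$ using $v^i = \langle \beta_x(dx^i), v\rangle_x$, whereas you transport $\mathbb{U}_x$ to the standard sphere $S^{m-1}$ through an orthonormal frame and prove the equivalent frame identity $\sum_\alpha e_\alpha^i e_\alpha^j = \beta^{ij}(x)$.
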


\begin{proof}
Rewrite \eqref{eqn.unifint1} as
\begin{align*}
\int_{\mathcal{S}^{\mathcal{H}}_x} dx^i (v) dx^j(v)  \mathbb{U}_x(dv) = \int_{\mathcal{S}^{\mathcal{H}}_x} \langle \beta_x(dx^i), v \rangle \langle\beta_x(dx^j), v \rangle  d\mathbb{U}_x(v)
\\
= \frac{1}{m} \langle \beta_x(dx^i), \beta_x(dx^j) \rangle =  \frac{1}{m}\beta^{ij}(x)
\end{align*}
The second equality follows from Corollary \ref{cor.equalint} below. From here \eqref{eqn.unifint2} follows by a similar argument after  realization that $p_i = \sum\limits_{a =1}^{d} g_{ia} v^a$ and $p_j = \sum\limits_{b=1}^{d} g_{jb} v^b$.
\end{proof}
Combining Proposition \ref{prop.unifint} with \eqref{eqn.2ndDeriv} leads immediately to

\begin{thm}
The horizontal sub-Laplacian indexed by $g$ can be locally written as
\begin{equation}
\begin{aligned}
\mathcal{L} = \frac{1}{m} \sum_{i,j = 1}^{d} \left\{ \beta^{ij} \frac{\partial^2 }{\partial x^i  \partial x^j}  - \sum_{a,b,k=1}^{d} \Gamma^{i j k} g_{ia}\beta^{ab} g_{bj}  \frac{\partial }{\partial x^k} \right\}
\\
= \frac{1}{m} \sum_{i,j=1}^{d} \left\{ \beta^{ij} \frac{\partial^2 }{\partial x^i  \partial x^j}  - \sum_{k=1}^{d} \Gamma^{i j k} \left[ GBG\right]_{ij}   \frac{\partial }{\partial x^k} \right\}
\label{eqn.Lglocal}
\end{aligned}
\end{equation}
where $\left[ GBG \right]_{ij}$ is the $ij$th entry of the matrix $GBG$ and $G$ and $B$ are defined in Notation \ref{nota.BG}.
\end{thm}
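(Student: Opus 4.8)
The plan is to substitute the pointwise computation \eqref{eqn.2ndDeriv} directly into the definition \eqref{eqn.L} of $\mathcal{L}$ and then integrate term by term using Proposition \ref{prop.unifint}. Fix $x \in M$ and local coordinates about $x$. For each $v \in \mathcal{S}^{\mathcal{H}}_x$ set $p = g(v)$, so that $\beta(p) = v$ by Proposition \ref{prop.gVmatters}; then the inner integrand of \eqref{eqn.L} is exactly the second $t$-$s$ derivative recorded in \eqref{eqn.2ndDeriv}, namely
\begin{equation*}
\sum_{i,j=1}^{d}\left\{ v^i v^j \frac{\partial^2 f}{\partial x^i \partial x^j}\Big|_x - \sum_{k=1}^{d} \Gamma^{ijk}(x)\, p_i p_j \frac{\partial f}{\partial x^k}\Big|_x \right\}.
\end{equation*}
Here one should first note that $t \mapsto \Phi_t(x,g(v))$ is defined on a common interval $[0,\tau)$ for all $v$ in the \emph{compact} sphere $\mathcal{S}^{\mathcal{H}}_x$, which follows from the smooth dependence of the Hamilton-Jacobi flow on its initial data together with compactness; this guarantees that the inner derivatives exist for every $v$.

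Next I would observe that the coefficients $\partial^2 f/\partial x^i\partial x^j|_x$, $\Gamma^{ijk}(x)$ and $\partial f/\partial x^k|_x$ are all independent of the integration variable $v$, so that the only $v$-dependence resides in the quadratic factors $v^i v^j$ and $p_i p_j$ with $p = g(v)$. Since the integrand is continuous in $v$ over a compact sphere it is bounded, so the integral $\int_{\mathcal{S}^{\mathcal{H}}_x}(\cdot)\,\mathbb{U}_x(dv)$ exists and, by linearity, may be applied to each summand separately. Integrating the second-order term with \eqref{eqn.unifint1} replaces $v^i v^j$ by $\tfrac{1}{m}\beta^{ij}(x)$, while integrating the first-order term with \eqref{eqn.unifint2} replaces $p_i p_j$ by $\tfrac{1}{m}\sum_{a,b=1}^{d} g_{ia}\beta^{ab}g_{bj}(x)$. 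Collecting these yields precisely the first displayed form of \eqref{eqn.Lglocal}.

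Finally, to pass to the second line it suffices to recognize the matrix product: by Notation \ref{nota.BG} the entries of $G$ and $B$ are $g_{ia}$ and $\beta^{ab}$, so $\sum_{a,b=1}^{d} g_{ia}\beta^{ab}g_{bj} = [GBG]_{ij}$ by the definition of matrix multiplication, and inserting this into the drift coefficient gives the claimed expression.

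The computation is essentially bookkeeping, and there is no deep obstacle — the theorem is stated as following ``immediately'' from the two preceding results. The only point that genuinely requires care is the justification for extracting the ($v$-independent) derivative coefficients from the integral and integrating term by term; this is underwritten by the continuity and boundedness of the integrand on the compact sphere $\mathcal{S}^{\mathcal{H}}_x$ together with the uniform existence time of the flow. A secondary subtlety is that the indices $i,j$ play different roles in the two terms, serving as the differentiation indices in the Hessian term and, after integration, as the summation indices of $[GBG]_{ij}$ contracted against $\Gamma^{ijk}$; once the relabeling in \eqref{eqn.2ndDeriv} is tracked carefully, the two forms match exactly.
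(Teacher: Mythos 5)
Your proof is correct and takes essentially the same route as the paper: the paper obtains \eqref{eqn.Lglocal} exactly by substituting the pointwise identity \eqref{eqn.2ndDeriv} into the definition \eqref{eqn.L} and integrating with Proposition \ref{prop.unifint}, then recognizing $\sum_{a,b=1}^{d} g_{ia}\beta^{ab}g_{bj} = [GBG]_{ij}$. The extra details you supply --- that $\beta(g(v)) = v$ for $v \in \mathcal{S}^{\mathcal{H}}_x$ by Proposition \ref{prop.gVmatters}, and that compactness of $\mathcal{S}^{\mathcal{H}}_x$ justifies the term-by-term integration --- are sound elaborations of steps the paper leaves implicit.
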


\begin{rem}
In the case that $\mathcal{H} = TM$, $B = G^{-1}$ and hence
\[
\mathcal{L} = \frac{1}{m} \sum_{i,j = 1}^{d} \left\{ \beta^{ij} \frac{\partial^2 }{\partial x^i  \partial x^j}  - \sum_{k=1}^{d} \Gamma^{i j k} g_{ij}  \frac{\partial }{\partial x^k} \right\},
\]
which is the ($\frac{1}{m}$ scaled) local formula for the Laplace-Beltrami operator on the Riemannian manifold $(M, g)$.
\end{rem}

With Proposition \ref{prop.LonlyonV} in mind, \eqref{eqn.Lglocal} appears deceivingly dependent on the structure of the compatible metric with the repeat appearance of its corresponding matrix $G$. However, using the notation in the proof of Proposition \ref{prop.gVmatters}, we have $g \circ \beta \circ g = g_{\mathcal{H}} \circ 0$, which as the proof of and remarks following Proposition \ref{prop.gVmatters} indicate, $g_\mathcal{H}$ is determined by the sub-Riemannian structure once the vertical bundle $\mathcal{V}$ is fixed. The following example in the Heisenberg case illustrates this.

\subsection{An example: the Heisenberg group}
\label{ssection.Heisen}
Let $\mathbb{H}$ be the Heisenberg group; that is, $\mathbb{H}\cong\mathbb{R}^3$ with the multiplication defined by

\[
\left( x_{1}, y_{1}, z_{1} \right) \star \left( x_{2}, y_{2}, z_{2} \right):=\left( x_{1}+x_{2}, y_{1}+y_{2}, z_{1}+z_{2} + \frac{1}{2}  \omega\left( x_{1}, y_{1}; x_{2}, y_{2}\right)\right),
\]
where $\omega$ is the standard symplectic form

 \[
 \omega\left( x_{1}, y_{1}; x_{2}, y_{2}\right):=x_{1}y_{2} - y_{1}x_{2}.
 \]
 Left multiplication by $\left( x, 0, 0 \right)$ and $\left( 0, y, 0 \right)$ induce two left-invariant vector fields

\begin{align}
X\left( q \right) := \frac{\partial}{\partial x}\Big|_{q} - \frac{1}{2} y \frac{\partial}{\partial z}\Big|_{q}
\notag
\\
Y\left( q \right) := \frac{\partial}{\partial y}\Big|_{q} + \frac{1}{2} x \frac{\partial}{\partial z}\Big|_{q}
 \label{eqn.XYheis}
\end{align}
for any $q \in \mathbb{H}$. At each point $q \in \mathbb{H}$ the globally defined vector fields $X\left( q \right)$ and $Y\left( q \right)$ span a two-dimensional subspace of $T_{q}\mathbb{H}$; set $\mathcal{H}_{q}:=\operatorname{Span} \left\{ X\left( q \right),  Y\left( q \right) \right\}$ and then

\[
\mathcal{H}:=\bigcup_{q \in \mathbb{H}} \mathcal{H}_{q}
\]
can be taken as the horizontal distribution. Moreover, at each $q \in \mathbb{H}$ we have

\[
[ X\left( q \right), Y\left( q \right)]=\frac{\partial}{\partial z}\Big|_{q}=: Z\left( q \right),
\]
and so H\"{o}rmander's condition is satisfied. Consider $M=\mathbb{H}$, the horizontal distribution $\mathcal{H}$ defined as above, and the inner product $\langle \cdot, \cdot \rangle$ on $\mathcal{H}_{q}$ defined so that $\{ X\left( q \right), Y\left( q \right) \}$ is an orthonormal basis for $\mathcal{H}_{q}$. Recall also that in Example \ref{ex.2.1} we described $\left( \mathbb{H}, \mathcal{H}, \langle \cdot, \cdot \rangle \right)$  as a contact manifold with $Z$ as a Reeb vector field.

A covector $\varphi \in T_p^{\ast}M$ will be identified with the triple $(\varphi_1, \varphi_2, \varphi_3) \in \mathbb{R}^3$ via $\varphi = \varphi_1 dx + \varphi_2  dy + \varphi_3 dz$.  We have
that for each $q = (x,y,z) \in \mathbb{H}$, the sub-Riemannian bundle map $\beta : T^{\ast}M \to TM$ is defined by
\begin{align}
 (\varphi_1, \varphi_2, \varphi_3) \xrightarrow{\beta_{q}}
\left( \varphi_1 - \frac{1}{2}y \varphi_3,
\varphi_2 + \frac{1}{2}x \varphi_3,
\frac{1}{2}(x\varphi_2 - y \varphi_1) + \frac{1}{4}(y^2 + x^2)\varphi_3
\right).
\end{align}
\label{eqn.HeisBeta}
The matrix representation of $\beta$ with entries $\beta^{ij} = dx^i(\beta(dx^j))$ is
\begin{equation} \label{eqn.HeisB}
B(x,y,z) =
\begin{pmatrix}
1 & 0 & -\frac{y}{2} \\
0 & 1 & \frac{x}{2} \\
-\frac{y}{2} & \frac{x}{2} & \frac{x^2 + y^2}{4}
\end{pmatrix}
\end{equation}

Using the fact that $\left( \mathbb{H}, \mathcal{H}, \langle \cdot, \cdot \rangle \right)$ is a contact sub-Riemannian manifold, we can extend the sub-Riemannian metric $\langle \cdot, \cdot \rangle$ to the Riemannian metric $g$ which makes $\{X,Y,Z\}$ a global orthogonal frame with $g(Z,Z) = \lambda > 0$.
The matrix representation of $g$ with entries $g_{ij} = g( \frac{\partial}{\partial x^j}, \frac{\partial}{\partial x^i})$ is
\begin{equation} \label{eqn.HeisG}
G(x,y,z) =
\begin{pmatrix}
1+ \frac{\lambda y^2}{4} & -\frac{\lambda xy}{4} & \frac{\lambda y}{2} \\
-\frac{\lambda xy}{4} & 1 + \frac{\lambda x^2}{4} & -\frac{\lambda x}{2} \\
\frac{\lambda y}{2} & -\frac{\lambda x}{2} & \lambda
\end{pmatrix},
\end{equation}
and therefore,
\begin{align*}
 GBG = \begin{pmatrix}
1+ \frac{\lambda y^2}{4} & -\frac{\lambda xy}{4} & \frac{\lambda y}{2} \\
-\frac{\lambda xy}{4} & 1 + \frac{\lambda x^2 }{4} & -\frac{\lambda x}{2} \\
\frac{\lambda y}{2} & -\frac{\lambda x}{2} & \lambda
\end{pmatrix}
 \begin{pmatrix}
1 & 0 & -\frac{y}{2}\\
0 & 1 & \frac{x}{2}\\
-\frac{y}{2} & \frac{x}{2} & \frac{x^2 + y^2}{4}
\end{pmatrix}
 \begin{pmatrix}
1+ \frac{\lambda y^2}{4} & -\frac{\lambda xy}{4} & \frac{\lambda y}{2} \\
-\frac{\lambda xy}{4} & 1 + \frac{\lambda x^2}{4} & -\frac{\lambda x}{2} \\
\frac{\lambda y}{2} & -\frac{\lambda x}{2} & \lambda
\end{pmatrix} \\
  =
\begin{pmatrix}
1 & 0 & 0 \\
0 & 1 & 0\\
0 & 0 & 0
\end{pmatrix}.
\end{align*}

 Here you can see the manifestation of Proposition \ref{prop.LonlyonV} through the independence of $GBG$ on any choice of $\lambda$.
Using \eqref{eqn.raisedChris} and \eqref{eqn.HeisB}, for any $k=1,2,3$, $\Gamma^{11k} = \Gamma^{22k} = 0$, which gives us all values needed to explicitly find \eqref{eqn.Lglocal} in this context.
\begin{align*}
\mathcal{L} &= \frac{1}{2} \sum_{i, j =1}^{3} \left\{ \beta^{ij}  \frac{\partial^2}{\partial x^i \partial x^j} \right\} -\frac{1}{2} \sum_{i, j, k=1}^{3} \left\{ \Gamma^{i j k} \left[ GBG \right]_{ij}  \frac{\partial }{\partial x^k} \right\} \\
&= \frac{1}{2} \sum_{i, j =1}^{3} \Bigg\{ \beta^{ij}  \frac{\partial^2}{\partial x^i \partial x^j} \Bigg\} - 0\\
&= \frac{1}{2}\Bigg\{ \frac{\partial^2}{\partial x^2} + \frac{\partial^2}{\partial y^2} + \frac{1}{4} (x^2 + y^2) \frac{\partial^2}{\partial z^2} - y \frac{\partial^2}{\partial x \partial z} + x \frac{\partial^2}{\partial y \partial z} \Bigg\}
\end{align*}
Thus we can rewrite $\mathcal{L}$ as
\[
\mathcal{L}  = \frac{1}{2}\left(X^2 + Y^2 \right).
\]

\section{Weak Convergence and Random Walks}\label{s4}

The first part of this section discusses the weak convergence results necessary to prove the convergence of the random walk developed in Section \ref{ssection.RW} to a horizontal Brownian motion. The main result is Theorem \ref{thm.RW}.

\subsection{Convergence of semigroups}

Let $C_c^{\infty}(M)$ and $C_c^{\infty}(T^{\ast}M)$ be the spaces of the smooth, compactly supported real-valued functions on $M$ and $T^{\ast}M$ equipped with the $\sup$ norm. We identify $C_c^{\infty}(M)$ with a closed subspace of $C_c^{\infty}(T^{\ast}M)$: if $f \in C_c^{\infty}(M)$ then the element $\tilde{f} \in C_c^{\infty}(T^{\ast}M)$ identified with $f$ is given by $\tilde{f}(x,p) := f(x)$.

\begin{df}\label{defi.DHJ}

For $f \in C_c^{\infty}(T^{\ast}M)$, the \emph{Hamilton-Jacobi flow field}, $\mathscr{D}_{HJ} : C_c^{\infty}(T^{\ast}M) \to C_c^{\infty}(T^{\ast}M)$, is defined by
\begin{equation}
\mathscr{D}_{HJ} f(x, p) = \frac{d}{dt} \Big|_{t=0} f \left( \Phi_t(x, p)\right). \label{eqn.DHJ} \end{equation}
\end{df}

\begin{rem}
If $f \in C_c^{\infty}(M)$, then $\mathscr{D}_{HJ} f(x,p) = v(f)$ where $v = \beta(p)$.
\end{rem}

\begin{rem} The semigroup property of flows implies that if $f \in C_c^{\infty}(T^{\ast}M)$, then
\begin{align}
\label{eqn.DHJ2}
\mathscr{D}_{HJ} \left( \mathscr{D}_{HJ} f \right)(x, p) = \frac{d}{ds}\Big|_{s=0} \frac{d}{dt} \Big|_{t=0} f\left( \Phi_{t+s}(x,p) \right).
\end{align}
\end{rem}

\begin{df}
\label{defi.UAP}
For $f \in C_c^{\infty}(TM)$, the \emph{horizontally averaged projection}

\[\mathcal{P} : C_c^{\infty}(T^{\ast} M) \to C_c^{\infty}(M),\]
is defined by
\begin{equation}
\label{eqn.UAP}
\mathcal{P} f(x) = \int_{\mathcal{S}^{\mathcal{H}}_x} f (x, g(v))  \mathbb{U}_x(dv).
\end{equation}
Here, as before, $\mathbb{U}_x$ is the rotationally invariant (uniform) probability measure on the unit sphere $\mathcal{S}^{\mathcal{H}}_x$.
\end{df}
Let us now make the following observation.
\begin{prop} \label{prop.PDDisLg}
For every $f \in C_c^{\infty}(M)$,
$\mathcal{L}f = \mathcal{P} \mathscr{D}_{HJ} \mathscr{D}_{HJ} f$.
\end{prop}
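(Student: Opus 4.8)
The plan is to unwind both sides to the same integral and observe that they coincide. Starting from the right-hand side, I would first apply the definition \eqref{eqn.UAP} of the horizontally averaged projection $\mathcal{P}$ to the function $\mathscr{D}_{HJ}\mathscr{D}_{HJ}f \in C_c^{\infty}(T^{\ast}M)$, which gives
\[
\mathcal{P}\mathscr{D}_{HJ}\mathscr{D}_{HJ}f(x) = \int_{\mathcal{S}^{\mathcal{H}}_x} \bigl(\mathscr{D}_{HJ}\mathscr{D}_{HJ}f\bigr)(x, g(v))\, \mathbb{U}_x(dv).
\]
Next I would rewrite the integrand using \eqref{eqn.DHJ2}, which (via the semigroup property of the flow) expresses the double application of $\mathscr{D}_{HJ}$ at the point $(x,g(v))$ as the iterated derivative $\frac{d}{ds}\big|_{0}\frac{d}{dt}\big|_{0} f\bigl(\Phi_{t+s}(x,g(v))\bigr)$.

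It then remains to match this with the integrand in the definition \eqref{eqn.L} of $\mathcal{L}$, where the iterated derivative appears in the opposite order $\frac{d}{dt}\big|_{0}\frac{d}{ds}\big|_{0}$. The key observation—and essentially the only content of the proof—is that for fixed $(x,v)$ the map $u \mapsto f\bigl(\Phi_u(x,g(v))\bigr)$ depends on $t$ and $s$ only through the combination $u = t+s$. Writing $F(u) := f\bigl(\Phi_u(x,g(v))\bigr)$, both orders of differentiation produce $F''(0)$: indeed $\frac{d}{ds}\big|_{0} F(t+s) = F'(t)$ followed by $\frac{d}{dt}\big|_{0} F'(t) = F''(0)$, while $\frac{d}{dt}\big|_{0} F(t+s) = F'(s)$ followed by $\frac{d}{ds}\big|_{0} F'(s) = F''(0)$. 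Hence the two integrands agree pointwise in $v$, the integrals against $\mathbb{U}_x$ coincide, and we obtain $\mathcal{L}f = \mathcal{P}\mathscr{D}_{HJ}\mathscr{D}_{HJ}f$.

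I expect no serious obstacle, as the statement is fundamentally a bookkeeping identity once the single-variable reduction is noticed. The only point deserving care is the regularity needed to make each step legitimate: that $F$ is genuinely twice differentiable at $0$ and that $v \mapsto F''(0)$ is continuous on the compact sphere $\mathcal{S}^{\mathcal{H}}_x$, so that the integral defining $\mathcal{P}$ is well posed. This follows from the smoothness of $f$, of the bundle map $g$, and of the Hamilton-Jacobi flow $\Phi$ in its initial data, together with the fact that the maximal existence interval of $t \mapsto \Phi_t(x,g(v))$ contains a neighborhood of $0$ uniformly for $v$ ranging over the compact sphere. Granting these standard facts, the chain of equalities yields the proposition directly.
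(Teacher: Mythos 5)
Your proof is correct and matches what the paper intends: the paper states this proposition as an immediate observation from the definitions \eqref{eqn.L}, \eqref{eqn.DHJ2}, and \eqref{eqn.UAP}, and your unwinding—reducing both iterated derivatives to $F''(0)$ for $F(u) = f\bigl(\Phi_u(x,g(v))\bigr)$ since the flow enters only through $t+s$—is exactly the bookkeeping that makes it immediate. Your added remarks on regularity and well-posedness of the integral are sound and only strengthen what the paper leaves implicit.
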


 Set $\mathcal{I}$ as the identity operator on $C_c^{\infty}(T^{\ast}M)$. We denote by $e^{t(\mathcal{P} - \mathcal{I})}$ the strongly continuous contraction semigroup on $C_c^{\infty}(T^{\ast}M)$ whose bounded generator is $\mathcal{P} - \mathcal{I}$.
 We denote by  $e^{t \mathscr{D}_{HJ}}$ the  strongly continuous contraction semigroup on $C_c^{\infty}(T^{\ast}M)$ whose generator is $\mathscr{D}_{HJ} $. Using Notation \ref{n.3.1} we have
 \[ e^{t \mathscr{D}_{HJ}} f(x, p)  = f\left(\Phi_t(x, p)\right). \]
 Finally, for any $\alpha > 0$ we denote by $T_{\alpha}(t)$ the strongly continuous contraction semigroup on $C_c^{\infty}(T^{\ast}M)$ whose generator is $\mathscr{D}_{HJ} + \alpha(\mathcal{P}-\mathcal{I})$. This is possible since $\mathcal{P}-\mathcal{I}$ is bounded.

 For more generalized notions of summing together generators we refer to \cite{Trotter1959a}. Our aim is to prove a limit theorem of $T_{\alpha}(\alpha t)$ as $\alpha \to \infty$ using \cite[Theorem 2.2]{Kurtz1973a}. To this end, we first state some prerequisites which follow easily from the definitions.

\begin{lem}\label{lem.KurtzPrereq1}
The following hold.

1) $\operatorname{Ran}(\mathcal{P}) = C_c^{\infty}(M)$.

2) $\mathcal{P} \mathscr{D}_{HJ} f = 0$ for $f \in C_c^{\infty}(M)$.

\end{lem}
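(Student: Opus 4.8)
The plan is to verify both assertions directly from the definitions, the essential inputs being that $\mathbb{U}_x$ is a probability measure, that $g$ is compatible (Proposition~\ref{prop.gVmatters}), and the antipodal symmetry of $\mathbb{U}_x$.

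For assertion (1), the inclusion $\operatorname{Ran}(\mathcal{P}) \subseteq C_c^{\infty}(M)$ is immediate, since $C_c^{\infty}(M)$ is by construction the codomain of $\mathcal{P}$. For the reverse inclusion I would take an arbitrary $h \in C_c^{\infty}(M)$, regard it as the element $\tilde{h} \in C_c^{\infty}(T^{\ast}M)$ with $\tilde{h}(x,p) = h(x)$ under the embedding fixed above, and compute $\mathcal{P}\tilde{h}$ straight from \eqref{eqn.UAP}. Since $\tilde{h}(x, g(v)) = h(x)$ is independent of $v$, it pulls out of the integral, leaving $\mathcal{P}\tilde{h}(x) = h(x)\,\mathbb{U}_x(\mathcal{S}^{\mathcal{H}}_x) = h(x)$, where the last equality uses that $\mathbb{U}_x$ is a probability measure. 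Hence $h = \mathcal{P}\tilde{h} \in \operatorname{Ran}(\mathcal{P})$, which gives equality.

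For assertion (2), fix $f \in C_c^{\infty}(M)$. By the remark following Definition~\ref{defi.DHJ} we have $\mathscr{D}_{HJ}f(x,p) = v(f) = df_x(v)$ with $v = \beta(p)$. The projection $\mathcal{P}$ evaluates this at $p = g(v)$ for $v$ ranging over $\mathcal{S}^{\mathcal{H}}_x$, so I would invoke compatibility of $g$: Proposition~\ref{prop.gVmatters} gives $\beta \circ g|_{\mathcal{H}} = \operatorname{Id}_{\mathcal{H}}$, whence $\beta(g(v)) = v$ and $\mathscr{D}_{HJ}f(x, g(v)) = df_x(v)$. Substituting into \eqref{eqn.UAP} yields $\mathcal{P}\mathscr{D}_{HJ}f(x) = \int_{\mathcal{S}^{\mathcal{H}}_x} df_x(v)\,\mathbb{U}_x(dv)$.

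It remains to see this last integral vanishes, which is the only genuine content. The integrand $v \mapsto df_x(v)$ is linear, hence odd, while $\mathbb{U}_x$ is rotationally invariant and in particular invariant under the antipodal map $v \mapsto -v$ on $\mathcal{S}^{\mathcal{H}}_x$; pairing an odd integrand against a symmetric measure forces the value $0$. Equivalently, in local coordinates one writes $df_x(v) = \sum_i \frac{\partial f}{\partial x^i}\big|_x v^i$ and uses the first-moment identity $\int_{\mathcal{S}^{\mathcal{H}}_x} v^i\,\mathbb{U}_x(dv) = 0$, the odd-order analogue of the second-moment computation \eqref{eqn.unifint1}, which follows from the same symmetry. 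I do not expect a real obstacle here: everything outside this symmetry step is definitional, and the only points needing a little care are the bookkeeping in the embedding $C_c^{\infty}(M) \hookrightarrow C_c^{\infty}(T^{\ast}M)$ used in (1) and the appeal to compatibility in (2), both already available.
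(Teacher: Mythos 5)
Your proof is correct, and it is precisely the definitional verification the paper has in mind when it states that the lemma's claims ``follow easily from the definitions'' (the paper supplies no written proof). Both of your key points are sound: part (1) reduces to $\mathbb{U}_x$ being a probability measure under the paper's identification $C_c^{\infty}(M) \hookrightarrow C_c^{\infty}(T^{\ast}M)$, and part (2) reduces, via the compatibility identity $\beta \circ g|_{\mathcal{H}} = \operatorname{Id}_{\mathcal{H}}$ from Proposition \ref{prop.gVmatters}, to the vanishing first moment $\int_{\mathcal{S}^{\mathcal{H}}_x} df_x(v)\,\mathbb{U}_x(dv) = 0$, which the antipodal symmetry of $\mathbb{U}_x$ gives.
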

Following the notation of T.~Kurtz in \cite{Kurtz1973a}, define
\begin{equation} \label{eqn.KurtzD0}
\begin{aligned}
 D_0  := & \left\{f \in \operatorname{Dom}(\mathscr{D}_{HJ}) \cap \operatorname{Ran}(\mathcal{P}):\right.
\\
& \left. \text{ there exists } h \in \operatorname{Dom}(\mathscr{D}_{HJ}) \text{ such that } (\mathcal{P}-\mathcal{I})h = - \mathscr{D}_{HJ} f \right\}.
\end{aligned}
\end{equation}
Using the first claim of Lemma \ref{lem.KurtzPrereq1}, we see that $\operatorname{Dom}(\mathscr{D}_{HJ}) \cap \operatorname{Ran}(\mathcal{P}) = C_c^{\infty}(M)$. Moreover, for $f \in C_c^{\infty}(M)$, define $h := \mathscr{D}_{HJ} f$. By the second claim of Lemma \ref{lem.KurtzPrereq1},  $(\mathcal{P}-\mathcal{I}) h = -\mathscr{D}_{HJ} f$. We conclude that $D_0 = C_c^{\infty}(M)$.

Before getting to Theorem \ref{thm.weakConv}, the main result regarding weak convergence to a sub-Riemannian Brownian motion, we first make an assumption necessary to apply the result of Kurtz we wish to use.

\begin{assumption}
\label{assume.feller}
We henceforth assume the semigroup $e^{t \mathcal{L} }$ is Feller in the following sense: for every $t \geqslant 0$, $\lambda > 0$, and $h \in C^{\infty}_c(M)$

\[
x \longmapsto \int_0^{\infty} e^{-\lambda t} e^{t \mathcal{L} }h(x) dt \in C^{\infty}_c(M).
\]
\end{assumption}
We can now formulate the main result of this section.
\begin{thm}
\label{thm.weakConv}
For every $f \in C^{\infty}_c(M)$,
\[
 \lim_{\alpha \to \infty} T_{\alpha}(\alpha t)f = e^{t \mathcal{L}} f,
\]
where the limit is taken in the $\sup$ norm.
\end{thm}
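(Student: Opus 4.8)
The plan is to recognize this statement as a singular-perturbation (averaging) limit of semigroups and to verify the hypotheses of Kurtz's theorem \cite[Theorem 2.2]{Kurtz1973a}, which is exactly the tool the preceding discussion has been assembling. First I would record the correct time-scaling: as a function of $t$, the semigroup $T_\alpha(\alpha t)$ is generated by $\alpha\mathscr{D}_{HJ} + \alpha^2(\mathcal{P}-\mathcal{I})$. This is the diffusive scaling $\varepsilon^{-1}A + \varepsilon^{-2}B$ (with $\varepsilon = 1/\alpha$, $A = \mathscr{D}_{HJ}$, $B = \mathcal{P}-\mathcal{I}$), in which $B$ is the ``fast'' generator and $A$ the ``slow'' drift. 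Since $\mathcal{P}-\mathcal{I}$ is bounded and $\mathscr{D}_{HJ}$ generates a strongly continuous contraction semigroup, $T_\alpha$ is a well-defined contraction semigroup by bounded perturbation, as already noted before Lemma \ref{lem.KurtzPrereq1}; the limit will have to be taken in the completion of $C^\infty_c(T^\ast M)$ (continuous functions vanishing at infinity), with the identification of $C^\infty_c(M)$ as a closed subspace.

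Next I would check the structural hypotheses one at a time. The operator $\mathcal{P}$ is a contraction projection onto $C^\infty_c(M)$: idempotency and $\|\mathcal{P}\|\leqslant 1$ are immediate from \eqref{eqn.UAP}, and $\operatorname{Ran}(\mathcal{P}) = C^\infty_c(M)$ is the first claim of Lemma \ref{lem.KurtzPrereq1}. Because $\mathcal{P}^2 = \mathcal{P}$, one computes $e^{t(\mathcal{P}-\mathcal{I})} = e^{-t}\mathcal{I} + (1-e^{-t})\mathcal{P} \to \mathcal{P}$ as $t\to\infty$, confirming that $\mathcal{P}$ is the stationary projection of the fast flow. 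The centering (solvability) condition is the heart of the matching: for $f\in C^\infty_c(M)$ I set $h := \mathscr{D}_{HJ}f$, and the second claim of Lemma \ref{lem.KurtzPrereq1} gives $(\mathcal{P}-\mathcal{I})h = \mathcal{P}\mathscr{D}_{HJ}f - \mathscr{D}_{HJ}f = -\mathscr{D}_{HJ}f$, which is precisely the equation defining $D_0$ in \eqref{eqn.KurtzD0}; hence $D_0 = C^\infty_c(M)$. The candidate limiting generator that Kurtz's theorem produces is $f \mapsto \mathcal{P}\mathscr{D}_{HJ}h = \mathcal{P}\mathscr{D}_{HJ}\mathscr{D}_{HJ}f$, which equals $\mathcal{L}f$ by Proposition \ref{prop.PDDisLg}. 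So all of the algebraic inputs identify the limit operator on $D_0$ with $\mathcal{L}$.

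The step I expect to be the main obstacle is the analytic closure/range condition needed to promote $\mathcal{L}$ from an operator defined on $D_0$ to the generator of an honest contraction semigroup, and to certify that $D_0$ is a core — this is exactly where Assumption \ref{assume.feller} is consumed. The Feller hypothesis says that the resolvent $h \mapsto \int_0^\infty e^{-\lambda t} e^{t\mathcal{L}} h\, dt$ maps $C^\infty_c(M)$ back into $C^\infty_c(M) = D_0$; equivalently $(\lambda - \mathcal{L})$ maps $D_0$ onto $C^\infty_c(M)$ for $\lambda > 0$. Together with the dissipativity that $\mathcal{L}$ inherits as an averaged second-order operator, the Hille--Yosida (Lumer--Phillips) theorem then yields that the closure of $\mathcal{L}|_{D_0}$ generates the contraction semigroup written $e^{t\mathcal{L}}$, with $D_0$ a core. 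With this range condition in hand, Kurtz's Theorem 2.2 applies verbatim and gives $\lim_{\alpha\to\infty} T_\alpha(\alpha t)f = e^{t\mathcal{L}}f$ uniformly for every $f\in C^\infty_c(M)$, which is the assertion. I would close by remarking that the only genuinely non-formal ingredients are the Feller regularity (assumed) and the verification that the limit lands in the range of $\mathcal{P}$, the purely combinatorial semigroup identities having been prepared in Lemma \ref{lem.KurtzPrereq1} and Proposition \ref{prop.PDDisLg}.
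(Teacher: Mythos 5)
Your proposal is correct and follows essentially the same route as the paper's own proof: verify via Lemma \ref{lem.KurtzPrereq1} that $D_0 = C^{\infty}_c(M)$ with $h = \mathscr{D}_{HJ}f$ solving the centering equation, use Assumption \ref{assume.feller} to obtain the range condition $C^{\infty}_c(M) \subset \operatorname{Ran}(\lambda - \mathcal{L})$, invoke \cite[Theorem 2.2]{Kurtz1973a}, and identify the limiting generator with $\mathcal{L}$ through Proposition \ref{prop.PDDisLg}. The extra verifications you supply (idempotency of $\mathcal{P}$, the explicit formula $e^{t(\mathcal{P}-\mathcal{I})} = e^{-t}\mathcal{I} + (1-e^{-t})\mathcal{P}$, dissipativity) are details the paper leaves implicit inside Kurtz's hypotheses, not a different argument.
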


\begin{proof}
By Assumption \ref{assume.feller}, for any $h \in C^{\infty}_c(M) = D_0$ and $\lambda > 0$, the function $k(x) = \int_0^{\infty} e^{-\lambda t} e^{t \mathcal{L}}h(x) dt$ is in $C_c^{\infty}(M)$; moreover, $(\lambda - \mathcal{L}) k = h$. This shows that $C^{\infty}_c(M) \subset \operatorname{Ran}(\lambda - \Delta_{\mathcal{H}})$. Hence by \cite[Theorem 2.2]{Kurtz1973a}, the closure of $\mathcal{P} \mathscr{D}_{HJ} \mathscr{D}_{HJ}$ is the generator of a strongly continuous contraction semigroup $e^{t \mathcal{P} \mathscr{D}_{HJ} \mathscr{D}_{HJ}}$ such that
\[
\lim_{\alpha \to \infty} T_{\alpha}(\alpha t) f = e^{t \mathcal{P} \mathscr{D}_{HJ} \mathscr{D}_{HJ}} f
\]
for every $f \in C_c^{\infty}(M)$, where the limit is in the $\sup$ norm. As noted in Proposition \ref{prop.PDDisLg}, $\mathcal{P} \mathscr{D}_{HJ} \mathscr{D}_{HJ} = \mathcal{L}$ on $C^{\infty}_c(M)$. This concludes the proof.
\end{proof}

\subsection{A sub-Riemannian random walk}
\label{ssection.RW}

\begin{assumption} \label{assumption.HopfRinow}
We assume that the sub-Riemannian manifold $\left( M, \mathcal{H}, \langle \cdot, \cdot, \rangle \right)$ is complete with respect to the Carnot-Carath\'{e}odory metric. Note that in this case this sub-Riemannian manifold is also geodesically complete, that is, all geodesics are defined for all $t \geqslant 0$ by a sub-Riemannian Hopf-Rinow theorem (e.g. \cite[Theorem 7.1]{Strichartz1986a}).
\end{assumption}

Let $\varepsilon > 0$ be a parameter that we eventually take to zero. Let $\{e_i\}_{i=1}^{\infty}$ be i.i.d. exponential random variables with parameter 1 and define $e_0 := 0$. Let us fix $(x,p) \in T^*M$ as our initial position and momentum and let $v = \beta(p)$. Define $(\xi^{\varepsilon}_t, p^{\varepsilon}_t) = \Phi_{\varepsilon t}(x,g(v))$ for $0 \leq t < e_1$. Given $e_1$, let $x_1^{\varepsilon} = \pi \circ \Phi_{\varepsilon e_1}(x,g(v)) \in T^{\ast}M$ where $\pi :T^{\ast} M \to M$ is the canonical projection, and take $v^{\varepsilon}_1$ randomly from $\mathcal{S}_{x^{\varepsilon}_1}^{\mathcal{H}}$ such that the law of $v^{\varepsilon}_1$ is $\mathbb{U}_{x^{\varepsilon}_1}$. From here, for $e_1 \leq t < e_2$, define $(\xi^{\varepsilon}_t, p^{\varepsilon}_t) = \Phi_{\varepsilon (t - e_1)} (x_1^{\varepsilon}, g(v_1^{\varepsilon}))$. Continuing recursively, for each $k\geq 0$,  once given $\{(x_0,v_0), (x_1^{\varepsilon}, v_1^{\varepsilon}), ..., (x_k^{\varepsilon}, v_k^{\varepsilon})\}$ and $\{e_i\}_{i=1}^{k+1}$, define $x_{k+1}^{\varepsilon} = \pi\big(\Phi_{\varepsilon e_{k+1}}(x_k^{\varepsilon}, g(v_k^{\varepsilon}))\big)$ and take $v^{\varepsilon}_{k+1}$ randomly from $\mathcal{S}_{x^{\varepsilon}_{k+1}}^{\mathcal{H}}$ such that the law of $v^{\varepsilon}_{k+1}$ is $\mathbb{U}_{x^{\varepsilon}_{k+1}}$. From here, for $e_{k+1} \leq t < e_{k+2}$ define $(\xi^{\varepsilon}_t, p^{\varepsilon}_t) = \Phi_{\varepsilon (t - e_{k+1})} (x_{k+1}^{\varepsilon},g( v_{k+1}^{\varepsilon}))$.

We now have a ($\varepsilon$-scaled) random walk $B_t^{\varepsilon}(x,p) := (\xi_t^{\varepsilon}, p_t^{\varepsilon} )$ in $T^* M$. Here, the notation $B_t^{\varepsilon}(x,p)$ emphasizes that $(x,p)$ are the initial conditions (and $\beta(p) = v$ is the initial horizontal velocity). Define $T_t^{\varepsilon} : C^{\infty}_c(T^*M) \to C^{\infty}_c(T^*M)$ by
\[
T_t^{\varepsilon}f(x,p) = \mathbb{E}\left[ f(B_t^{\varepsilon}(x,p))\right].
\]
With this we are ready to present the final piece needed, Theorem \ref{thm.SemiGroupTt}, before the statement of convergence, Theorem \ref{thm.RW}. Our setup to this point is such that we can use a weaker version  of the argument in \cite[Proposition 3.3]{Pinsky1976a} to prove Theorem \ref{thm.SemiGroupTt}. The inability to reproduce the stronger statement arises from the fact that $B_t(x,p_1) = B_t(x,p_2)$ when $\beta(p_1) = \beta(p_2)$, even though $\Phi_t(x,p_1)$ need not be equal to $\Phi_t(x,p_2)$.

\begin{thm}
\label{thm.SemiGroupTt}
For every $f \in C^{\infty}_c(M)$,
\begin{align}
T^{\varepsilon}_t f = e^{t(\varepsilon \mathscr{D}_{HJ} + \mathcal{P}-\mathcal{I})}f.
\end{align}
\end{thm}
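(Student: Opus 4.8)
The plan is to recognize $B^{\varepsilon}$ as a piecewise-deterministic Markov process — it follows the Hamilton--Jacobi flow $\Phi_{\varepsilon\,\cdot}$ between the jump times $e_1 < e_2 < \cdots$, and at each jump (which occur at unit rate, since the $e_i$ are i.i.d.\ $\mathrm{Exp}(1)$) the velocity is resampled according to $\mathbb{U}_x$ — and to match its transition semigroup $T^{\varepsilon}_t$ against the analytic semigroup $e^{tA}$ with $A=\varepsilon\mathscr{D}_{HJ}+(\mathcal{P}-\mathcal{I})$ through a first-jump (renewal) decomposition, as in \cite[Proposition 3.3]{Pinsky1976a}. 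First I would condition on the first jump time $e_1$. On $\{e_1>t\}$, which has probability $e^{-t}$, the walk is the deterministic curve $s\mapsto\pi\Phi_{\varepsilon s}(x,g(v))$ with $v=\beta(p)$; on $\{e_1=s\}$, $s\le t$, the walk reaches $x_1=\pi\Phi_{\varepsilon s}(x,g(v))$, draws $v_1\sim\mathbb{U}_{x_1}$, and restarts. The recursive construction together with the defining average in Definition \ref{defi.UAP} then yields the renewal equation
\[ T^{\varepsilon}_t f = e^{-t}e^{\varepsilon t\mathscr{D}_{HJ}}f+\int_0^t e^{-s}\,e^{\varepsilon s\mathscr{D}_{HJ}}\,\mathcal{P}\,T^{\varepsilon}_{t-s}f\,ds, \]
in which the resampling step is precisely the operator $\mathcal{P}$ of \eqref{eqn.UAP}.

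Next I would produce the same integral equation for the right-hand side. Writing $A=L_0+\mathcal{P}$ with $L_0=\varepsilon\mathscr{D}_{HJ}-\mathcal{I}$, the fact that $\mathcal{P}$ is a bounded perturbation together with the variation-of-constants (Duhamel) formula gives $e^{tA}=e^{tL_0}+\int_0^t e^{(t-s)L_0}\mathcal{P}\,e^{sA}\,ds$, and since $e^{tL_0}=e^{-t}e^{\varepsilon t\mathscr{D}_{HJ}}$ this is, after the substitution $s\mapsto t-s$, identical in form to the renewal equation above. Both $t\mapsto T^{\varepsilon}_t f$ and $t\mapsto e^{tA}f$ are therefore solutions of the same linear Volterra equation with kernel $s\mapsto e^{-s}e^{\varepsilon s\mathscr{D}_{HJ}}\mathcal{P}$. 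Because $e^{\varepsilon s\mathscr{D}_{HJ}}$ and $\mathcal{P}$ are $\sup$-norm contractions, the associated integral operator is a contraction on $C\big([0,T];C_c^{\infty}(T^{\ast}M)\big)$ for $T$ small, so the solution there is unique; iterating over consecutive intervals via the semigroup property propagates uniqueness to all $t\ge 0$ and yields the claimed identity.

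The step I expect to be the main obstacle — and the reason only a weaker form of Pinsky's argument is available — is the reconciliation of the two flows that appear in the two equations. In the renewal equation the walk genuinely flows from $(x,g(v))$ with $v=\beta(p)$, so $T^{\varepsilon}_t f(x,p)$ depends on $p$ only through $\beta(p)$; by contrast $e^{\varepsilon s\mathscr{D}_{HJ}}$ a priori transports using the full momentum $p$, and indeed $\mathscr{D}_{HJ}^{2} f$ carries the drift term $\Gamma^{ijk}p_i p_j$ appearing in \eqref{eqn.2ndDeriv}, which sees more than $\beta(p)$. The resolution is that every integrand to which $e^{\varepsilon s\mathscr{D}_{HJ}}$ is applied lies in $C_c^{\infty}(M)$ — either $f$ itself or some $\mathcal{P}(\cdots)$, which by \eqref{eqn.UAP} lands on the base — so only the base point $\pi\Phi_{\varepsilon s}$ of the flow is ever tested; and on momenta of the form $p=g(v)$ one has $g(\beta(p))=p$, whence this base point coincides with the walk's position. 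I would therefore carry out the identification on the image of $g|_{\mathcal{H}}$, where the two renewal equations agree term by term; this is exactly the set of momenta produced by the construction after each resampling, and it is all that the subsequent convergence argument requires.
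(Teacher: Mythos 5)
Your argument is correct, and it shares the paper's probabilistic core: conditioning on the first exponential clock, which turns the velocity resampling into the operator $\mathcal{P}$ and yields a first-jump decomposition of the process. Where you differ is in the analytic identification step. The paper works entirely on the Laplace-transform side: it introduces the resolvent $S^{\varepsilon}_{\lambda}$ of $T^{\varepsilon}_t$ and the resolvent $R^{\varepsilon}_{\lambda}$ of $e^{\varepsilon t \mathscr{D}_{HJ}}$, proves in Lemmas \ref{lem.PrfSemiGrp1} and \ref{lem.PrfSemiGrp2} the transformed version of your renewal equation,
\begin{equation*}
S^{\varepsilon}_{\lambda} f(x,p) \;=\; R^{\varepsilon}_{1+\lambda} f(x,g\circ\beta(p)) \;+\; R^{\varepsilon}_{1+\lambda}\,\mathcal{P}\, S^{\varepsilon}_{\lambda} f(x,g\circ\beta(p)),
\end{equation*}
and then applies $(1+\lambda-\varepsilon\mathscr{D}_{HJ})$ to arrive at the resolvent identity $(\lambda - [\varepsilon\mathscr{D}_{HJ}+\mathcal{P}-\mathcal{I}])S^{\varepsilon}_{\lambda}f = f$, concluding because a strongly continuous contraction semigroup is determined by its resolvent. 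Your route stays in the time domain: the same decomposition gives a Volterra equation, Duhamel's formula for the bounded perturbation $\mathcal{P}$ of $L_0 = \varepsilon\mathscr{D}_{HJ}-\mathcal{I}$ shows that $e^{t(\varepsilon\mathscr{D}_{HJ}+\mathcal{P}-\mathcal{I})}f$ solves it too, and uniqueness follows by a contraction (or Gr\"{o}nwall) argument; in fact the kernel has operator norm $\int_0^T e^{-s}\,ds = 1-e^{-T}<1$ on every finite interval, so your interval-iteration is not even needed. What the paper's version buys is that uniqueness comes for free from semigroup/resolvent theory; what yours buys is an elementary, self-contained uniqueness step that avoids Laplace inversion. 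Finally, your third paragraph isolates exactly the right subtlety, and resolves it the same way the paper implicitly does: as written, your renewal identity for $T^{\varepsilon}_t f$ is an honest equality only at momenta in the image of $g|_{\mathcal{H}}$ (the left side at $(x,p)$ equals the right side at $(x,g\circ\beta(p))$), and the Volterra equation is closed on that set precisely because $\mathcal{P}$ samples only momenta of the form $g(v)$ while the flow semigroup is only ever applied to functions pulled back from $M$. This matches the paper's own, terser, handling --- all of its displayed identities are evaluated at $(x,g\circ\beta(p))$ --- and it suffices for Theorem \ref{thm.RW}, since $T^{\varepsilon}_t f(x,p)$ depends on $p$ only through $\beta(p)$.
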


Before exposing the proof of Theorem \ref{thm.SemiGroupTt} (which is given below in Section \ref{app.thm.SemiGroupTt}), let us note that as a corollary, we arrive at the convergence result which is our main theorem.
\begin{thm}
\label{thm.RW}
For every $f \in C_c^{\infty}(M)$,
\begin{align}
\lim_{\varepsilon \to 0} T^{\varepsilon}_{t/\varepsilon^2} f = e^{t \mathcal{L}} f.
\end{align}
\end{thm}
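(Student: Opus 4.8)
The plan is to recognize that Theorem \ref{thm.RW} is a direct corollary obtained by matching the time-scaled semigroup of the random walk with the Trotter-type limit already established in Theorem \ref{thm.weakConv}. The entire argument reduces to identifying the correct scaling parameter and then quoting the two theorems that precede this statement; I do not expect a genuine obstacle, only the bookkeeping of making sure the semigroup identities are applied on the class $C_c^{\infty}(M)$, which is exactly where they are available.

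First I would invoke Theorem \ref{thm.SemiGroupTt}, which for $f \in C_c^{\infty}(M)$ gives the explicit semigroup representation $T^{\varepsilon}_t f = e^{t(\varepsilon \mathscr{D}_{HJ} + \mathcal{P} - \mathcal{I})} f$. Substituting the diffusive time scaling $t \mapsto t/\varepsilon^2$ and expanding the generator in the exponent, I would write
\begin{equation*}
T^{\varepsilon}_{t/\varepsilon^2} f = e^{\frac{t}{\varepsilon^2}\left( \varepsilon \mathscr{D}_{HJ} + (\mathcal{P} - \mathcal{I})\right)} f = e^{t\left( \frac{1}{\varepsilon}\mathscr{D}_{HJ} + \frac{1}{\varepsilon^2}(\mathcal{P}-\mathcal{I})\right)} f.
\end{equation*}
The key step is then to compare this exponent with the one appearing in Theorem \ref{thm.weakConv}. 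Since $T_{\alpha}(t)$ is by definition the semigroup generated by $\mathscr{D}_{HJ} + \alpha(\mathcal{P}-\mathcal{I})$, the doubly-scaled object is $T_{\alpha}(\alpha t) = e^{\alpha t \left( \mathscr{D}_{HJ} + \alpha(\mathcal{P}-\mathcal{I})\right)} = e^{t\left( \alpha \mathscr{D}_{HJ} + \alpha^2 (\mathcal{P}-\mathcal{I})\right)}$. Setting $\alpha = 1/\varepsilon$ makes the two exponents coincide exactly, so that $T^{\varepsilon}_{t/\varepsilon^2} = T_{1/\varepsilon}(t/\varepsilon)$ as operators on $C_c^{\infty}(M)$.

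Finally, I would observe that $\varepsilon \to 0$ corresponds precisely to $\alpha = 1/\varepsilon \to \infty$, so that
\begin{equation*}
\lim_{\varepsilon \to 0} T^{\varepsilon}_{t/\varepsilon^2} f = \lim_{\alpha \to \infty} T_{\alpha}(\alpha t) f = e^{t \mathcal{L}} f,
\end{equation*}
where the last equality is exactly the conclusion of Theorem \ref{thm.weakConv} and the limit is in the $\sup$ norm. The only point requiring a word of care is that the representation of Theorem \ref{thm.SemiGroupTt} and the limit of Theorem \ref{thm.weakConv} are both asserted for $f \in C_c^{\infty}(M)$, which is precisely the class in the statement, so no extension argument is needed; the substance of the convergence (the Feller-type Assumption \ref{assume.feller} and the application of Kurtz's theorem) has already been absorbed into Theorem \ref{thm.weakConv}.
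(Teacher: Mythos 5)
Your proposal is correct and follows essentially the same route as the paper: both proofs combine Theorem \ref{thm.SemiGroupTt} (to identify $T^{\varepsilon}_{t/\varepsilon^2}f$ with $e^{(t/\varepsilon^2)(\varepsilon \mathscr{D}_{HJ} + \mathcal{P}-\mathcal{I})}f$ on $C_c^{\infty}(M)$) with Theorem \ref{thm.weakConv} under the substitution $\alpha = 1/\varepsilon$, so that $\varepsilon \to 0$ becomes $\alpha \to \infty$. The only difference is that you spell out the semigroup-rescaling bookkeeping $T^{\varepsilon}_{t/\varepsilon^2} = T_{1/\varepsilon}(t/\varepsilon)$ explicitly, which the paper leaves implicit.
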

\begin{proof}
From Theorem \ref{thm.weakConv}, it follows that if $f \in C^{\infty}_c(M)$ that $\lim\limits_{\varepsilon \to 0} e^{(t/\varepsilon^2)(\varepsilon \mathscr{D}_{HJ} + \mathcal{P}-\mathcal{I})}f =e^{t \mathcal{L}} f$. Since Theorem \ref{thm.SemiGroupTt} shows that $T^{\varepsilon}_t$ and $e^{t(\varepsilon \mathscr{D}_{HJ} + \mathcal{P}-\mathcal{I})}$ agree on $C^{\infty}_c(M)$, the result follows.
\end{proof}

\section{Proofs}

\subsection{The Proof of Theorem \ref{thm.SemiGroupTt}}
 \label{app.thm.SemiGroupTt}
We continue with the notation introduced in Section \ref{ssection.RW}. For the i.i.d.~exponential random variables $\{e_i\}_{i=1}^{\infty}$ and for $k \geq 0$, let $\tau_k = e_0 + e_1 + \cdots + e_k$; recall that $e_0 := 0$. We denote by $R^{\varepsilon}_{\lambda}$ the resolvent of $e^{\varepsilon t \mathscr{D}_{HJ}}$; that is,
\[
R^{\varepsilon}_{\lambda}f(x,p) =e^{\varepsilon t \mathscr{D}_{HJ}}f(x,p) =  f( \Phi_{\varepsilon t}(x,p)).
\]
We denote by $S^{\varepsilon}_{\lambda}$ the resolvent of $T^{\varepsilon}_t$; that is,
\begin{align}
S^{\varepsilon}_{\lambda} f(x, p) = \int_0^{\infty} e^{-\lambda t} \mathbb{E} \left[ f(B^{\varepsilon}_t(x,p))\right] dt.
\end{align}

\begin{lem}
\label{lem.PrfSemiGrp1}
For any $f \in C_c^{\infty}(T^*M)$,
\begin{align*}
\mathbb{E}\bigg[ \int_0^{ \tau_1} e^{-\lambda t} f(B_t^{\varepsilon}(x,p)) \,dt \bigg] = R^{\varepsilon}_{1+\lambda} f(x,g \circ \beta(p)).
\end{align*}
\end{lem}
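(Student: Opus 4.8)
The plan is to unwind the definition of the random walk on its first holding interval and then integrate out the single exponential time $e_1$. Since $e_0 = 0$, we have $\tau_1 = e_1$, and by the construction of $B^{\varepsilon}$ the process on $[0,\tau_1)$ is the deterministic Hamilton-Jacobi flow started from $(x, g(v))$ with $v = \beta(p)$; that is, $B_t^{\varepsilon}(x,p) = \Phi_{\varepsilon t}(x, g\circ\beta(p))$ for $0 \leq t < \tau_1$. The crucial bookkeeping point, and exactly the feature highlighted just before Theorem \ref{thm.SemiGroupTt}, is that the initial momentum $p$ is immediately replaced by $g(\beta(p))$, so the flow we integrate against issues from $g\circ\beta(p)$ rather than from $p$. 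The only remaining randomness is the length $\tau_1 = e_1$.

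First I would condition on $e_1$ and use that it is exponential with parameter $1$, hence has density $e^{-s}\,ds$ on $[0,\infty)$, to write
\[
\mathbb{E}\bigg[\int_0^{\tau_1} e^{-\lambda t} f(B_t^{\varepsilon}(x,p))\,dt\bigg] = \int_0^{\infty} e^{-s} \bigg(\int_0^{s} e^{-\lambda t} f\big(\Phi_{\varepsilon t}(x, g\circ\beta(p))\big)\,dt\bigg)\,ds.
\]
Because $f$ is bounded with compact support, the double integral is absolutely convergent, so Fubini's theorem applies and I may interchange the order of integration over the region $\{0 \leq t \leq s\}$. Carrying out the inner $s$-integral with $\int_t^{\infty} e^{-s}\,ds = e^{-t}$ yields
\[
\int_0^{\infty} e^{-(1+\lambda)t} f\big(\Phi_{\varepsilon t}(x, g\circ\beta(p))\big)\,dt,
\]
and by the definition of the resolvent $R^{\varepsilon}_{\mu}h(y,q) = \int_0^{\infty} e^{-\mu t} h(\Phi_{\varepsilon t}(y,q))\,dt$ of the flow semigroup $e^{\varepsilon t \mathscr{D}_{HJ}}$, this is precisely $R^{\varepsilon}_{1+\lambda} f(x, g\circ\beta(p))$, which is the assertion. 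The exponential holding time is exactly what converts the decay rate $\lambda$ into $1+\lambda$.

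There is no genuine analytic obstacle here; the computation reduces to a one-line Fubini argument once the first step is set up. The only point demanding care is the substitution of the initial momentum: one must record that $B^{\varepsilon}$ depends on the initial datum $(x,p)$ only through $v = \beta(p)$ and then flows from $g(v) = g\circ\beta(p)$. Getting this right in the first step is what produces the $g\circ\beta(p)$ in the argument of $R^{\varepsilon}_{1+\lambda}$ on the right-hand side, and it is the same phenomenon underlying the weakened form of Pinsky's argument noted in the text preceding Theorem \ref{thm.SemiGroupTt}.
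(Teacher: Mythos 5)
Your proof is correct and takes essentially the same route as the paper's: both observe that on $[0,\tau_1)$ the walk is the deterministic flow $\Phi_{\varepsilon t}(x, g\circ\beta(p))$, condition on the exponential holding time, and apply Fubini so that $\int_t^{\infty} e^{-s}\,ds = e^{-t}$ converts the rate $\lambda$ into $1+\lambda$, identifying the result as $R^{\varepsilon}_{1+\lambda}f(x, g\circ\beta(p))$. If anything, your write-up is slightly more careful, since you justify the interchange of integrals and state the resolvent definition correctly, whereas the paper's displayed double integral and its definition of $R^{\varepsilon}_{\lambda}$ contain minor typographical slips.
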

\begin{proof}
If the initial conditions of $B_t^{\varepsilon}$ are $(x,p)$, then or $0 \leqslant t < \tau_1$, $B_t^{\varepsilon} =\Phi_{\varepsilon t}(x,g \circ \beta(p))$. Thusly
\begin{align*}
&\mathbb{E}_{(x,p)}\bigg[ \int_0^{\tau_1} e^{-\lambda t} f(B^{\varepsilon}_t) dt \bigg] = \mathbb{E}_{(x,p)}\bigg[ \int_0^{\tau_1} e^{-\lambda t} f(\Phi_{\varepsilon t}(x,g\circ \beta(p))) dt \bigg]  \\
&=  \int_0^{\infty}\int_0^t e^{- s}e^{-\lambda t}  f(\Phi_{\varepsilon t}(x,g\circ \beta(p)))\, dt\, ds = \int_0^{\infty}e^{-(\lambda + 1)t} f(\Phi_{\varepsilon t}(x,g \circ \beta(p)))\, dt\\
& = R^{\varepsilon}_{1+\lambda} f(x,g \circ \beta(p)).
\end{align*}
This concludes the proof.
\end{proof}

\begin{lem}
\label{lem.PrfSemiGrp2}
For any $f \in C_c^{\infty}(T^*M)$,
\begin{align*}
\mathbb{E}\bigg[  \int_{\tau_1}^{\infty} e^{-\lambda t} f(B^{\varepsilon}_t(x,p)) dt \bigg] = R^{\varepsilon}_{1+\lambda} \mathcal{P} S^{\varepsilon}_{\lambda} f(x, g \circ \beta(p)).
\end{align*}
\end{lem}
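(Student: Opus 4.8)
The plan is to condition on the first jump time $\tau_1 = e_1$ and the renewal structure of the random walk, exploiting the strong Markov property at $\tau_1$. The key observation is that after the first exponential clock rings at time $\tau_1$, the process restarts: at that moment it sits at the point $x_1^{\varepsilon} = \pi(\Phi_{\varepsilon \tau_1}(x, g\circ\beta(p)))$ in $M$, and a fresh velocity $v_1^{\varepsilon}$ is drawn from $\mathbb{U}_{x_1^{\varepsilon}}$, independent of the past. Thus for $t \geqslant \tau_1$, writing $t = \tau_1 + s$ with $s \geqslant 0$, the shifted process $B^{\varepsilon}_{\tau_1 + s}(x,p)$ is, conditionally on $(x_1^{\varepsilon}, v_1^{\varepsilon})$, a copy of $B^{\varepsilon}_s(x_1^{\varepsilon}, g(v_1^{\varepsilon}))$ started afresh.

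The main computation proceeds as follows. First I would substitute $t = \tau_1 + s$ in the integral, pulling out the factor $e^{-\lambda \tau_1}$:
\begin{align*}
\mathbb{E}\bigg[ \int_{\tau_1}^{\infty} e^{-\lambda t} f(B^{\varepsilon}_t(x,p))\, dt \bigg]
= \mathbb{E}\bigg[ e^{-\lambda \tau_1} \int_0^{\infty} e^{-\lambda s} f(B^{\varepsilon}_{\tau_1 + s}(x,p))\, ds \bigg].
\end{align*}
Conditioning on the $\sigma$-algebra generated by $\tau_1$ and $(x_1^{\varepsilon}, v_1^{\varepsilon})$ and applying the renewal/Markov property, the inner expectation of $\int_0^\infty e^{-\lambda s} f(B^{\varepsilon}_{\tau_1+s})\,ds$ becomes $S^{\varepsilon}_{\lambda} f(x_1^{\varepsilon}, g(v_1^{\varepsilon}))$. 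Next I would average over the freshly drawn velocity $v_1^{\varepsilon} \sim \mathbb{U}_{x_1^{\varepsilon}}$: by Definition \ref{defi.UAP} this averaging is exactly the horizontally averaged projection $\mathcal{P}$, so $\mathbb{E}[ S^{\varepsilon}_{\lambda} f(x_1^{\varepsilon}, g(v_1^{\varepsilon})) \mid x_1^{\varepsilon}] = \mathcal{P} S^{\varepsilon}_{\lambda} f(x_1^{\varepsilon})$. Finally, $x_1^{\varepsilon}$ depends only on $\tau_1$ through the deterministic flow $\Phi$, and $\tau_1 = e_1$ is exponential with parameter $1$; averaging the product $e^{-\lambda \tau_1} \cdot \mathcal{P} S^{\varepsilon}_{\lambda} f(\pi \Phi_{\varepsilon \tau_1}(x, g\circ\beta(p)))$ against the density $e^{-u}\,du$ yields $\int_0^\infty e^{-(1+\lambda)u} \,(\mathcal{P} S^{\varepsilon}_\lambda f)\big(\pi\Phi_{\varepsilon u}(x, g\circ\beta(p))\big)\, du$, which is precisely $R^{\varepsilon}_{1+\lambda}\, \mathcal{P} S^{\varepsilon}_{\lambda} f(x, g\circ\beta(p))$ by the definition of the resolvent $R^\varepsilon_{1+\lambda}$ of the flow semigroup (the same identification used in Lemma \ref{lem.PrfSemiGrp1}).

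The step I expect to be the main obstacle is making the renewal/strong-Markov step fully rigorous, and in particular keeping track of the argument of $\mathcal{P} S^{\varepsilon}_\lambda f$ as a function on $M$ versus $T^*M$. Because the restarted process only depends on $(x_1^{\varepsilon}, v_1^{\varepsilon})$ and not on the full cotangent data, one must verify that $S^{\varepsilon}_\lambda f(x_1^\varepsilon, g(v_1^\varepsilon))$ is indeed the correct resolvent applied at the post-jump state; this is exactly the point flagged in the remark preceding Theorem \ref{thm.SemiGroupTt}, namely that $B_t(x,p)$ depends on $p$ only through $\beta(p)$. Care is therefore needed to confirm that applying $\mathcal{P}$ (which pushes a function on $T^*M$ down to a function on $M$ by integrating over the sphere via $v \mapsto g(v)$) correctly commutes with the resolvent structure, and that the resulting function lies in the domain where $R^{\varepsilon}_{1+\lambda}$ acts as claimed.
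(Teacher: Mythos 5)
Your proposal is correct and follows essentially the same route as the paper's own proof: shift time by $\tau_1$ to pull out $e^{-\lambda\tau_1}$, condition on $(x_1^{\varepsilon}, v_1^{\varepsilon})$ to produce $S^{\varepsilon}_{\lambda}f$, average the fresh velocity over $\mathbb{U}_{x_1^{\varepsilon}}$ to produce $\mathcal{P}$, and integrate against the exponential law of $\tau_1$ to recognize $R^{\varepsilon}_{1+\lambda}$. Your closing concern about tracking functions on $M$ versus $T^*M$ is precisely the point the paper resolves by the identification of $C_c^{\infty}(M)$ as a subspace of $C_c^{\infty}(T^*M)$, so that $\mathcal{P} S^{\varepsilon}_{\lambda}f(x_{\tau_1}) = \mathcal{P} S^{\varepsilon}_{\lambda}f\left(\Phi_{\varepsilon t}(x, g\circ\beta(p))\right)$.
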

\begin{proof} Notice that
\[
\mathbb{E}\bigg[  \int_{\tau_1}^{\infty} e^{-\lambda t} f(B^{\varepsilon}_t(x,p)) dt \bigg] = \mathbb{E}\bigg[ e^{- \lambda \tau_1}  \int_{0}^{\infty} e^{-\lambda t} f(B_t^{\varepsilon}(x_1^{\varepsilon},g(v_1^{\varepsilon}))) dt \bigg],
\]
\[
\mathbb{E}\bigg[  \int_{0}^{\infty} e^{-\lambda t} f(B_t^{\varepsilon}(x_1^{\varepsilon},g(v_1^{\varepsilon}))) dt \ \Big| \ (x_1^{\varepsilon}, v_1^{\varepsilon} ) \bigg] = S^{\varepsilon}_{\lambda} f(x_1^{\varepsilon}, g(v^{\varepsilon}_1)),
\]
and
\begin{align*}
&\mathbb{E}\left[ S_{\lambda}^{\varepsilon} f(x_1^{\varepsilon}, g(v_1^{\varepsilon})) \ \big| \ \tau_1= t \right] = \mathbb{E}\left[   S_{\lambda}^{\varepsilon} f(x_t, g(U)) \right] \\
&\qquad = \int_{\mathcal{S}^{\mathcal{H}}_{x_t}} S_{\lambda}^{\varepsilon} f(x_t, g(v)) \,\mathbb{U}_{x_t}(dv)  = \mathcal{P} S_{\lambda}^{\varepsilon}f(x_t).
\end{align*}
where $x_t = \pi\circ \Phi_{\varepsilon t}(x, g\circ \beta(p))$ (as before,  $\pi : T^*M \to M$ is the canonical projection) and  $U$ is a uniform random variable on $\mathcal{S}^{\mathcal{H}}_{x_t}$. Putting these pieces together,
\begin{align*}
 &\mathbb{E}\bigg[  \int_{\tau_1}^{\infty} e^{-\lambda t} f(B^{\varepsilon}_t(x,p)) dt \bigg]   = \mathbb{E}\left[ e^{-\lambda \tau_1} S^{\varepsilon}_{\lambda} f(x_1^{\varepsilon}, v_1^{\varepsilon}) \right]  = \mathbb{E} \left[ e^{-\lambda \tau_1} \mathcal{P} S_{\lambda}^{\varepsilon}f(x_{\tau_1})\right] \\
 &= \int_0^{\infty} e^{-\lambda t} e^{-t} \mathcal{P} S_{\lambda}^{\varepsilon} f(\Phi_{\varepsilon t}(x,g \circ \beta(p))) dt  = R^{\varepsilon}_{\lambda+1} \mathcal{P} S_{\lambda}^{\varepsilon} f (x,g \circ \beta(p)).
\end{align*}
Note that the third equality used $\mathcal{P} S_{\lambda}^{\varepsilon}f(x_{\tau_1}) = \mathcal{P} S_{\lambda}^{\varepsilon} f(\Phi_{\varepsilon t}(x,g \circ \beta(p)))$ by the identification of $C^{\infty}_c(M)$ as a subset of $C^{\infty}_c(T^*M)$.
\end{proof}

\begin{proof}[Proof of Theorem {\ref{thm.SemiGroupTt}}]
Using Lemmas \ref{lem.PrfSemiGrp1} and \ref{lem.PrfSemiGrp2}, we have
\begin{align*}
S^{\varepsilon}_{\lambda} f(x, p) = \mathbb{E}\bigg[ \int_0^{\tau_1} e^{-\lambda t} f(B^{\varepsilon}_t (x,p)) dt \bigg]  + \mathbb{E}\bigg[  \int_{\tau_1}^{\infty} e^{-\lambda t}  f(B^{\varepsilon}_t (x,p)) dt \bigg] \\
= R^{\varepsilon}_{1+\lambda} f(x,g \circ \beta(p)) +  R^{\varepsilon}_{1+\lambda} \mathcal{P}^g S^{\varepsilon}_{\lambda} f(x, g \circ \beta(p)).
\end{align*}
Multiplying on the left by $1 + \lambda - \varepsilon \mathscr{D}_{HJ}$ yields
\begin{align*}
(1 + \lambda - \varepsilon \mathscr{D}_{HJ}) S^{\varepsilon}_{\lambda} f(x, g \circ \beta(p)) =  f(x, g \circ \beta(p)) +  \mathcal{P}^g S^{\varepsilon}_{\lambda} f(x, g \circ \beta(p)).
\end{align*}
That is,
\[
(\lambda - [\varepsilon \mathscr{D}_{HJ} + \mathcal{P}^g - \mathcal{I}]) S^{\varepsilon}_{\lambda}f(x, g \circ \beta(p)) = f(x, g \circ \beta(p)).
\]
In particular, for any $f \in C^{\infty}_c(M)$,
\[
(\lambda - [\varepsilon \mathscr{D}_{HJ} + \mathcal{P}^g - \mathcal{I}]) S^{\varepsilon}_{\lambda}f = f.
\]
From here we can now conclude the result.
\end{proof}

\subsection{Averaging over the unit sphere in an inner product space}
Here we provide details of the proof of Proposition \ref{prop.unifint} which are solely properties of finite-dimensional inner product spaces.

\begin{prop}
\label{prop.equalint}
Let $\mathcal{X}$ be an $n$-dimensional real inner product space with inner product $\langle \cdot, \cdot \rangle$. Let $S$ be the unit sphere in $\mathcal{X}$ with respect to this inner product and set $\mu$ as the rotationally invariant probability measure on $S$. Given any $X \in \mathcal{X}$,
\[ \int_S (X,\xi)^2 \mu(d\xi) = \frac{|X|^2}{n}. \]
\end{prop}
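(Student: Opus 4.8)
The plan is to exploit the rotational invariance of $\mu$ to reduce the integral to a single normalization constant, and then to pin that constant down with a trace/Parseval identity.

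First I would set $f(X) := \int_S (X,\xi)^2\,\mu(d\xi)$ and argue that $f$ depends only on $|X|$. The key point is that for any linear isometry $R$ of $\mathcal{X}$, the pushforward of $\mu$ under $\xi \mapsto R\xi$ is again $\mu$ (by uniqueness of the rotationally invariant probability measure on $S$), so a change of variables gives
\[
f(RX) = \int_S (RX,\xi)^2\,\mu(d\xi) = \int_S (X, R^{-1}\xi)^2\,\mu(d\xi) = f(X).
\]
Since any two vectors of equal norm differ by such an isometry, $f(X)$ is a function of $|X|$ alone; together with the evident homogeneity $f(\lambda X) = \lambda^2 f(X)$ this forces $f(X) = c\,|X|^2$ for some constant $c \geqslant 0$ independent of $X$.

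Next I would evaluate $c$ by fixing an orthonormal basis $e_1, \dots, e_n$ of $\mathcal{X}$ and summing the identity $f(e_i) = c$ over $i$:
\[
nc = \sum_{i=1}^n \int_S (e_i,\xi)^2\,\mu(d\xi) = \int_S \sum_{i=1}^n (e_i,\xi)^2\,\mu(d\xi) = \int_S |\xi|^2\,\mu(d\xi) = 1,
\]
where I use Parseval's identity $\sum_i (e_i,\xi)^2 = |\xi|^2$, the fact that $|\xi| = 1$ for $\xi \in S$, and that $\mu$ is a probability measure. Hence $c = 1/n$ and $f(X) = |X|^2/n$, which is the claim.

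The argument is essentially routine, and the only step demanding care is the first one: I must be explicit that it is the invariance together with the uniqueness of $\mu$ that lets me conclude $f$ is a function of $|X|$ alone. The interchange of the finite sum and the integral is harmless, since $S$ is compact and the integrand continuous, so I do not anticipate a genuine obstacle here.
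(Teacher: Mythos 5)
Your proposal is correct and follows essentially the same route as the paper: rotational invariance of $\mu$ reduces the integral to a single constant (the paper phrases this as constancy over $X \in S$ via a rotation and its adjoint, you phrase it via pushforward invariance plus homogeneity), and then Parseval's identity summed over an orthonormal basis pins the constant to $1/n$. No gaps; the two arguments differ only in presentation.
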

\begin{proof}
It suffices to show that if $X \in S$, then $\int_S (X,\xi)^2 \mu(d\xi) = 1/n.$ To this end, suppose $X, Y \in S$ and $l : S \to S$ is any rotation such that $l(Y)=X$. Since the adjoint of a rotation is again a rotation, we have,
\[ \int_S (X,\xi)^2\mu(d\xi) = \int_S (l(Y),\xi)^2 \mu(d\xi) = \int_S (Y, l^{\ast}(\xi))^2 \mu(d\xi) = \int_S (Y,\xi)^2 \mu(d\xi)\]
where the final identity follows from the rotational invariance of $\mu$. This shows that the value of the integral is constant for any choice of $X \in S$. Set

\[
a :=  \int_S (X,\xi)^2\mu(d\xi).
\]
Take $\{X_i: 1\leqslant i \leqslant n \} \subset S$ to be an orthonormal basis for $V$, then for any $\xi \in S$

\[
1 = \| \xi \|^2 = \sum_{i=1}^n (X_i,\xi)^2.
\]
Therefore,
\[ 1 = \int_S \| \xi \|^2 \mu(d\xi) = \sum_{i=1}^n \int_S (X_i, \xi)^2 \mu(d\xi) = na \]
which then implies $a = 1/n$.
\end{proof}

\begin{cor}
\label{cor.equalint}
Let $\mathcal{X}$, $S$, and $\mu$ be as in the previous proposition. Take $X,Y \in \mathcal{X}$. Then
\[
\int_S (X,\xi)(Y,\xi) \mu(d\xi) = \frac{(X,Y)}{n}.
\]
\end{cor}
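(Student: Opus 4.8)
The plan is to reduce the bilinear statement to the quadratic one already established in Proposition \ref{prop.equalint} via polarization, so that essentially no new integration is required. The map $(X,Y) \mapsto \int_S (X,\xi)(Y,\xi)\,\mu(d\xi)$ is a symmetric bilinear form on $\mathcal{X}$, as is $(X,Y) \mapsto (X,Y)/n$; since Proposition \ref{prop.equalint} asserts that these two forms agree on the diagonal $X = Y$, and a symmetric bilinear form is determined by its associated quadratic form, the two forms must coincide. That observation alone settles the corollary, but I would make it concrete with the polarization identity to keep the argument self-contained.

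Concretely, first I would invoke the polarization identity for the inner product appearing inside the integrand, writing
\[
(X,\xi)(Y,\xi) = \tfrac{1}{4}\Big[ (X+Y,\xi)^2 - (X-Y,\xi)^2 \Big],
\]
which holds pointwise for every $\xi \in S$. Integrating both sides against $\mu$ and applying Proposition \ref{prop.equalint} to the vectors $X+Y$ and $X-Y$ then yields
\[
\int_S (X,\xi)(Y,\xi)\,\mu(d\xi) = \tfrac{1}{4}\Big[ \tfrac{|X+Y|^2}{n} - \tfrac{|X-Y|^2}{n} \Big] = \tfrac{1}{4n}\Big[ |X+Y|^2 - |X-Y|^2 \Big].
\]
Finally I would apply the polarization identity once more, now for the norm on $\mathcal{X}$, namely $|X+Y|^2 - |X-Y|^2 = 4(X,Y)$, to conclude that the right-hand side equals $(X,Y)/n$, as claimed.

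I do not anticipate any genuine obstacle here: the result is a direct corollary and the computation is linear in the already-proven Proposition \ref{prop.equalint}. The only point requiring mild care is bookkeeping of the polarization constants (ensuring the factor $\tfrac14$ on the integrand side and the factor $4$ on the norm side cancel correctly), and the observation that the proposition may be freely applied to the unnormalized vectors $X \pm Y$ even though its hypothesis was phrased for vectors on the sphere, since both the proposition's statement $\int_S (Z,\xi)^2\mu(d\xi) = |Z|^2/n$ and its proof are homogeneous of degree two in $Z$ and hence valid for arbitrary $Z \in \mathcal{X}$.
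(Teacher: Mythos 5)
Your proof is correct and takes essentially the same route as the paper: both reduce the bilinear identity to Proposition \ref{prop.equalint} by polarization, the paper by expanding $(X+Y,\xi)^2$ and cancelling the diagonal terms, you by the equivalent identity $(X,\xi)(Y,\xi)=\tfrac{1}{4}\left[(X+Y,\xi)^2-(X-Y,\xi)^2\right]$. Your closing caveat is unnecessary, since Proposition \ref{prop.equalint} is already stated for arbitrary $X\in\mathcal{X}$ rather than only for unit vectors, so it applies directly to $X\pm Y$.
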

\begin{proof}
By the previous proposition,
\[
\int_S (X+Y,\xi)^2 \,\mu(d\xi) = \frac{|X+Y|^2}{n} = \frac{|X|^2}{n} + \frac{|Y|^2}{n} + 2\frac{(X,Y)}{n}.
\]
On the other hand, $(X+Y,\xi)^2 = (X,\xi)^2 + (Y,\xi)^2 + 2(X,\xi)(Y,\xi)$. Hence another application of  the previous proposition yields,
\begin{align*}
\int_S (X+Y,\xi)^2 \,\mu(d\xi) = \int_S \big\{ (X,\xi)^2 + (Y,\xi)^2 + 2(X,\xi)(Y,\xi) \big\} \mu(d\xi) \\= \frac{|X|^2}{n} + \frac{|Y|^2}{n} + 2\int_S(X,\xi)(Y,\xi)\,\mu(d\xi).
\end{align*}
Comparing terms, the result now follows.
\end{proof}

\begin{acknowledgement}
The authors are grateful for many helpful and motivating conversations with Alexander Teplyaev, Michael Hinz, and Dan Kelleher. In large part, this paper is the result of our attempt to address several questions raised during those discussions.
\end{acknowledgement}

\end{document}